\newtheorem{theorem}{Theorem}[section]
\newtheorem{lemma}[theorem]{Lemma}
\newtheorem{proposition}[theorem]{Proposition}
\newtheorem{corollary}[theorem]{Corollary}
\newtheorem{conjecture}[theorem]{Conjecture}
\theoremstyle{definition}
\newtheorem{definition}[theorem]{Definition}
\numberwithin{equation}{section}
\numberwithin{figure}{section} 
\numberwithin{table}{section}
\DeclareMathOperator{\PD}{{\rm PD}}
\DeclareMathOperator{\disp}{{\rm disp}}
\DeclareMathOperator{\sys}{{\rm sys}}
\DeclareMathOperator{\area}{{\rm area}}
\newcommand{\Mob}{{\rm Mob}}
 \newcommand{\AAA}{{\mathcal{A}}}
\newcommand\Bra{{\rm Br}}
\newcommand\cf {\hbox{\it cf.\ }}
\newcommand\dist{{\rm dist}}
\newcommand{\length}{{\rm length}}
\newcommand \arc{\, {\rm arc}}
\newcommand \cqfd{\unskip\kern 6pt\penalty 500
\raise -2pt\hbox{\vrule\vbox to10pt{\hrule width 4pt
\vfill\hrule}\vrule}\par}                 
\def\adots{\mathinner{\mkern2mu\raise1pt\hbox{.}
\mkern3mu\raise4pt\hbox{.}\mkern1mu\raise7pt\hbox{.}}}
\def\hfl#1{\frac{\buildrel{#1}}{{\hbox to 12mm{\rightarrowfill}}}}
\newcommand\C{{\mathbb {C}}}
 \newcommand\R {{\mathbb R}}
\newcommand\RR {{\mathbb R}} \newcommand\RP {{\mathbb R}{\mathbb P}}
\newcommand\T {{\mathbb T}} \newcommand\Z {{\mathbb Z}}
\newcommand{\rp}{\mathbb R\mathbb P}
\long\def\forget#1\forgotten{} %
\long\def\forgett#1\forgottent{} %
\def\circ{\mathchoice%
 {\mathrel{\raise 1pt\hbox{$\scriptstyle\mathchar"020E$}}}
 {\mathrel{\raise 1pt\hbox{$\scriptstyle\mathchar"020E$}}}
 {\mathrel{\raise 1pt\hbox{$\scriptscriptstyle\mathchar"020E$}}}
 {}
}
\newcommand{\nc}{\newcommand} \nc{\on}{\operatorname}
\nc{\df}{\on{\it df}}
\nc{\conf}{\on{conf}}
\nc{\spt}{\on{spt}}
\nc{\norm}[1]{\| #1 \|}
\nc{\parallelleer}{\norm{\ }} 
\nc{\parallelh}{\norm h} 
\nc{\parallelk}{\norm k} 
\nc{\parallelx}{\norm x} 
\nc{\parallelhrr}{\norm {h_\RR}} 
\nc{\parallelom}{\norm \omega} 
\nc{\parallelomij}{\norm {\omega_{i_j}}} 
\nc{\parallelomx}{\norm {\omega_{x}}} 
\nc{\parallelpi}{\norm \pi} 
\nc{\parallelalf}{\norm \alpha} 
\nc{\parallelalfs}{\norm {\alpha_s}} 
\nc{\parallelalfi}{\norm {\alpha_i}} 
\nc{\parallelalfij}{\norm {\alpha_{i_j}}} 
\nc{\parallelbeta}{\norm \beta} 
\nc{\parallelbetat}{\norm {\beta_t}} 
\nc{\parallelhcapalf}{\norm {h \cap \alpha}} 
\nc{\parallelPDralf}{\norm {\PD_\RR(\alpha)}} 
\nc{\strichleer}{| \  |}
\nc{\NN}{\mathbb N}
\nc{\rr}{\mbox{$\scriptstyle\mathbb R$}}
\nc{\dF}{{\it dF}} 
\nc{\DF}{{\it DF}} 
\nc{\ds}{{\it ds}} 
\nc{\dvol}{{\it dvol}}
\nc{\grad}{{\rm grad}} 
\nc{\strichw}{\|\omega\|} 
\nc{\strichwx}{|\omega_x|}
\nc{\Hess}{{\rm Hess}}
\begin{document}

\title{Hyperellipticity and systoles of Klein surfaces}

\author[M.~Katz]{Mikhail G. Katz$^{*}$}

\thanks{$^{*}$Supported by the Israel Science Foundation
grant~1294/06}

\address{Department of Mathematics, Bar Ilan University, Ramat Gan
52900 Israel} 

\email{katzmik@macs.biu.ac.il}

\author[S.~Sabourau]{St\'ephane Sabourau}


\address{Laboratoire d'Analyse et Math\'ematiques Appliqu\'ees, UMR 8050,
Universit\'e Paris-Est,
61 Avenue du G\'en\'eral de Gaulle, 94010 Cr\'eteil, France}

\email{sabourau@lmpt.univ-tours.fr}




\subjclass
{Primary 53C23;    
Secondary 30F10,   
58J60    
}

\keywords{Antiholomorphic involution, coarea formula, Dyck's surface,
hyperelliptic curve, M\"obius band, Klein bottle, Riemann surface,
Klein surface, Loewner's torus inequality, systole}

\date{\today}

\begin{abstract}
Given a hyperelliptic Klein surface, we construct companion Klein
bottles, extending our technique of companion tori already exploited
by the authors in the genus~$2$ case.  Bavard's short loops on such
companion surfaces are studied in relation to the original surface so
to improve a systolic inequality of Gromov's.  A basic idea is to use
length bounds for loops on a companion Klein bottle, and then analyze
how curves transplant to the original non-orientable surface.  We
exploit the real structure on the orientable double cover by applying
the coarea inequality to the distance function from the real locus.
Of particular interest is the case of Dyck's surface.  We also exploit
an optimal systolic bound for the M\"obius band, due to Blatter.
\end{abstract}

\maketitle

\tableofcontents

\section{Introduction}

Systolic inequalities for surfaces compare length and area, and can
therefore be thought of as ``opposite'' isoperimetric inequalities.
The study of such inequalities was initiated by C. Loewner in 1949
when he proved his torus inequality for~$\T^2$ (see Pu \cite{Pu} and
Horowitz {\em et al.\/}~\cite{HKK}).  The systole, denoted ``sys'', of
a space is the least length of a loop which cannot be contracted to a
point in the space, and is therefore a natural generalisation of the
{\em girth\/} invariant of graphs.

\begin{theorem}[Loewner's torus inequality]
Every metric on the~$2$-dimensional torus~$\T$ satisfies the bound
\begin{equation}
\label{11e}
\sys(\T) \leq
2^{\tfrac{1}{2}}_{\phantom{I}}3^{-\tfrac{1}{4}}_{\phantom{I}}
\sqrt{\area(\T)}.
\end{equation}
\end{theorem}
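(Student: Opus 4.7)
The plan is to invoke the uniformization theorem to reduce the problem to the case of a flat torus. Any Riemannian metric $g$ on $\T^2$ is conformally equivalent to some flat metric $g_0$ of unit area, so we may write $g = f^2 g_0$ for a positive conformal factor $f$ on $\T^2$. Realize $(\T^2, g_0)$ as $\R^2/\Lambda$ for some lattice $\Lambda \subset \R^2$, and let $\gamma_0$ be a shortest closed geodesic of $g_0$. The key idea is to compare $\sys(g)$ to $\sys(g_0)$ by averaging $f$ over a foliation of $\T^2$ by closed geodesics parallel to $\gamma_0$.

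The main estimate proceeds as follows. Let $\{\gamma_t\}_{t \in [0, L_0]}$ be the family of closed geodesics of $g_0$ parallel to $\gamma_0$, parameterized by arclength $t$ in the orthogonal direction, so that $\length_{g_0}(\gamma_t) = \sys(g_0)$ and $\sys(g_0) \cdot L_0 = \area(g_0) = 1$. Every $\gamma_t$ is a loop in $\T^2$, hence
\[
\sys(g) \leq \length_g(\gamma_t) = \int_{\gamma_t} f \, ds_{g_0}.
\]
Integrating in $t$ via Fubini and applying the Cauchy--Schwarz inequality yields
\[
\sys(g) \cdot L_0 \leq \int_{\T^2} f \, dvol_{g_0} \leq \sqrt{\area(g)} \sqrt{\area(g_0)} = \sqrt{\area(g)}.
\]
Combining this with $L_0 = 1/\sys(g_0)$ gives $\sys(g)^2 \leq \sys(g_0)^2 \cdot \area(g)$.

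It remains to bound $\sys(g_0)^2$ from above in terms of $\area(g_0) = 1$, which is precisely Hermite's classical inequality in dimension two: among unit-area flat tori, the equilateral one maximizes the systole, with $\sys(g_0)^2 \leq 2/\sqrt{3}$. Putting these together yields
\[
\sys(g)^2 \leq \frac{2}{\sqrt{3}} \area(g),
\]
which is equivalent to \eqref{11e}.

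The conformal reduction and the averaging step are straightforward; the main obstacle is the arithmetic statement about the Hermite constant $\gamma_2 = 2/\sqrt{3}$. This is handled by picking a $\Z$-basis $(e_1, e_2)$ of $\Lambda$ with $|e_1| = \sys(g_0)$ and $|e_2|$ minimal among vectors independent from $e_1$, then observing that the fundamental parallelogram has area $|e_1||e_2|\sin\theta$ with $\theta \in [\pi/3, \pi/2]$ (otherwise $e_2 \pm e_1$ would be shorter than $e_2$), so $1 = \area(g_0) \geq \sys(g_0)^2 \sin(\pi/3) = \sys(g_0)^2 \sqrt{3}/2$, yielding the bound. Equality is achieved for the conformal class of the hexagonal (equilateral) torus carrying the flat metric.
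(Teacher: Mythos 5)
Your argument is correct. Note, however, that the paper does not actually prove Loewner's inequality: it is quoted in the introduction as a classical result, with references to Pu and to Horowitz--Katz--Katz, so there is no in-paper proof to match yours against. What you give is the standard proof: uniformize to a unit-area flat metric $g_0$ conformal to $g$, integrate the conformal factor over the foliation by closed geodesics parallel to a systolic geodesic of $g_0$, apply Fubini and Cauchy--Schwarz to get $\sys(g)^2\leq \sys(g_0)^2\,\area(g)$, and then bound $\sys(g_0)^2/\area(g_0)$ by the Hermite constant $\gamma_2=2/\sqrt{3}$ via the reduced-basis angle argument. All steps check out: the parallel geodesics are noncontractible and of length $\sys(g_0)$, the transversal parameter has total length $\area(g_0)/\sys(g_0)$, and the lattice argument (a shortest vector together with a shortest independent vector forms a basis, with angle forcible into $[\pi/3,\pi/2]$) is the standard one. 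Your averaging-over-leaves step is an integral-geometric variant of the idea the paper does use later (Proposition~3.2: averaging the conformal factor by an isometry or involution does not decrease the systolic ratio); either route reduces the Riemannian statement to the flat/arithmetic one, and your version has the advantage of not needing to identify the full isometry group of the flat torus. Equality analysis (hexagonal torus) is not needed for the stated inequality, so its brief treatment is harmless.
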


In higher dimensions, M.~Gromov's deep result \cite{Gr1}, relying on
filling invariants, exhibits a universal upper bound for the systole
in terms of the total volume of an essential manifold.  L.~Guth
\cite{Gu11} recently found an alternative proof not relying on filling
invariants, and giving a generalisation of Gromov's inequality, see
also Ambrosio and Katz~\cite{AK}.

In dimension~$2$, the focus has been, on the one hand, on obtaining
near-optimal asymptotic results in terms of the genus \cite{KS2, KSV},
and on the other, on obtaining realistic estimates in cases of low
genus \cite{KS1, HKK}.  One goal has been to determine whether all
aspherical surfaces satisfy Loewner's bound \eqref{11e}, a question
that is still open in general.  We resolved it in the affirmative for
genus~$2$ in \cite{KS1}.  An optimal inequality of
C.~Bavard~\cite{Bav1} for the Klein bottle~$K$ is stronger than
Loewner's bound:
\begin{equation}
\label{11f}
\sys(K) \leq C_{{\rm Bavard}} \sqrt{\area(K)}, \quad\quad C_{{\rm
Bavard}}=
\pi^{\tfrac{1}{2}}_{\phantom{I}}8^{-\tfrac{1}{4}}_{\phantom{I}}
\approx 1.0539.
\end{equation}
Gromov proved a general estimate for all aspherical surfaces:
\begin{equation}
\label{11b}
\sys^2 \leq \frac{4}{3} \area,
\end{equation}
see \cite[Corollary~5.2.B]{Gr1}.  As Gromov points out in \cite{Gr1},
the~$\frac{4}{3}$ bound~\eqref{11b} is actually {\em optimal\/} in the
class of Finsler metrics.  Therefore any further improvement is not
likely to result from a mere application of the coarea formula.  One
can legitimately ask whether any improvement is in fact possible, of
course in the framework of Riemannian metrics.

The goal of the present article is to furnish such an improvement
in the case of non-orientable surfaces.  We will say that such a
surface is hyperelliptic if its orientable double cover is.

\begin{theorem} \label{theo:main}
Let~$n\geq 2$.  Every Riemannian metric from a hyperelliptic conformal
type on the surface~$n\rp^2$ satisfies the bound
\begin{equation}
\label{12b}
\sys^2 \leq 1.333 \; \area.
\end{equation}
\end{theorem}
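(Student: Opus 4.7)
The plan is to leverage the hyperelliptic structure on $X=n\rp^2$ to reduce the systolic problem to simpler surfaces for which sharp bounds are already known, namely Bavard's Klein bottle bound \eqref{11f} and Blatter's optimal Möbius band bound, together with a coarea estimate near the real locus of the antiholomorphic involution.

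First I would pass to the orientable double cover $\tilde X$, a hyperelliptic Riemann surface of genus $g=n-1\geq 1$, which carries two commuting involutions: the deck transformation $\tau$ (antiholomorphic) and the hyperelliptic involution $J$. The fixed locus of $\tau$ (the ``real locus'') projects to a disjoint union of essential simple closed curves in $X$, each admitting a Möbius band neighborhood. Following the companion torus idea from \cite{KS1}, I would use the $\mathbb{Z}/2\times\mathbb{Z}/2$ action generated by $\tau$ and $J$ to build \emph{companion Klein bottles}: quotients or subsurfaces in which the conformal automorphism group forces a Klein bottle structure, and whose areas sum to a definite fraction of $\area(X)$ while short essential loops on them transplant back to essential loops on $X$ of controlled length.

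The core of the argument is a case analysis according to the ratio $\sys(X)^2/\area(X)$. If this ratio is small, the bound \eqref{12b} holds trivially. If it is large, then Bavard's inequality \eqref{11f} applied to a companion Klein bottle gives a bound with constant close to $C_{{\rm Bavard}}^2\approx 1.111$, which is well below $4/3$. The intermediate range is handled by combining the coarea formula for the distance function from the real locus with Blatter's Möbius band bound on tubular neighborhoods: integrating the lengths of equidistant curves recovers area, while Blatter's inequality controls how thick the Möbius neighborhoods must be relative to their soul length. The three estimates (trivial bound, Bavard on a companion, Blatter plus coarea) are balanced so that the worst of them falls strictly below $4/3$, yielding the constant $1.333$.

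The main obstacle will be the transplantation step: showing that the systole-realizing loop on a companion Klein bottle lifts or descends to an essential loop on $X$ of comparable length without area loss, and verifying that the real locus is either long enough to feed the coarea argument or short enough that a companion Klein bottle of small area exists. This requires a careful analysis of the quotient map by $J$ (which sends $\tilde X$ to a sphere with $2g+2$ branch points) together with its equivariance under $\tau$, and an optimization of parameters over the three regimes to squeeze the constant past Gromov's $4/3$ in \eqref{11b}.
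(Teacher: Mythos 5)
Your outline captures the general flavor of the paper's hardest case, but as it stands it has several concrete gaps and two misstatements. First, the deck involution $\tau$ of $\tilde X \to X$ is fixed point free, so ``the fixed locus of $\tau$'' is empty; the real locus that actually feeds the coarea argument is the fixed point set of the composition $\tau \circ J$, i.e.\ the equator lying over $\hat\R \subset \hat\C$, and it is the distance from this equator (equivalently from its image in $X$) that is used. Second, the companion Klein bottles are not subsurfaces of $X$ whose areas sum to a fraction of $\area(X)$: they are antipodal quotients of companion tori, i.e.\ of double covers of $\hat\C$ branched at four of the $2g+2$ branch points, and after $J$-averaging each companion has area \emph{equal} to $\area(X)$. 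Consequently your ``large ratio'' regime is circular as stated: Bavard's bound \eqref{11f} on a companion says nothing about $\sys(X)$ until the transplantation problem is solved, and that problem is precisely the content you defer. The paper resolves it by a case dichotomy you would need to reproduce: a $1$-sided systolic loop on a companion transplants via a figure-eight projection through a point of the equator; a $2$-sided one projects to a loop $\Delta \subset \hat\C$, and one argues separately according to whether $\Delta$ meets $\hat\R$, whether the three companions induce different partitions of the branch locus (cut-and-paste), and whether $\Delta$ is simple.

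The more serious gap is the closing mechanism in your ``intermediate range.'' Blatter's bound plus coarea only produce \emph{area lower bounds} (in the paper: $\area(\Mob) \geq 0.324$ for the M\"obius band containing the equator, and $\length(\Delta) \leq \beta^{-1}$ with $\beta=\sqrt{1.333}$, under the assumption $\sys \geq \beta$ and unit area); by themselves they do not produce a short noncontractible loop, so ``balancing three regimes'' does not get below $4/3$. The decisive step in the paper is to use the resulting annulus decomposition $3\rp^2 = \Sigma_{1,1} \cup \Mob$ with $\area(\Sigma_{1,1}) \leq 0.676$, cap the once-holed torus by a hemisphere over a circle of length $2\,\length(\Delta)$, and apply Loewner's inequality \eqref{11e} to the capped torus to obtain a loop of square-length at most $\tfrac{2}{\sqrt 3}\bigl(\tfrac{2}{\pi\beta^2}+0.676\bigr) \leq 1.333$, contradicting $\sys \geq \beta$. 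Nothing in your plan plays the role of this Loewner step. Finally, your argument is really tailored to $n=3$: the paper treats $n \geq 4$ by a different and more elementary count (disjoint $L/4$-balls around the Weierstrass points, each of area at least $L^2/8$ by the hyperelliptic ball estimate from \cite{KS1}, plus a coarea bound on the $L/4$-neighborhood of $\partial Y$), giving $\area \geq (\tfrac14+\tfrac n8)\sys^2$, while $n=2$ is immediate from Bavard. You would need either to supply these cases or to show your companion machinery extends to them.
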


Note the absence of an ellipsis following ``333'', making our estimate
an improvement on Gromov's~$\frac{4}{3}$ bound \eqref{11b}.  By
keeping track of the best constants in our estimates throughout the
proof of the theorem, one could obtain a slightly better bound.
However, our goal is merely to develop techniques sufficient to
improve the $\frac{4}{3}$ bound.  Since every conformal class on
Dyck's surface~$ K\# \rp^2 = \T^2 \# \rp^2 = 3\rp^2$ of Euler
characteristic~$-1$ is hyperelliptic, we have

\begin{corollary}
Every Riemannian metric on Dyck's surface~$3\rp^2$ satisfies
\[
\sys(3\rp^2)^2 \leq 1.333 \, \area(3\rp^2).
\]
\end{corollary}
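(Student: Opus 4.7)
The plan is to observe that the corollary is a direct application of Theorem~\ref{theo:main} at $n=3$, once the hyperellipticity hypothesis is verified for every conformal structure on Dyck's surface. So the task reduces to a single classical observation about genus-$2$ Riemann surfaces.

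First, I would compute the topology of the orientable double cover. Since $\chi(3\rp^2) = 2 - 3 = -1$, the orientable double cover $\widetilde{X}$ has Euler characteristic $-2$, hence is a closed orientable surface of genus~$2$. In the conformal setting, any conformal class on $3\rp^2$ lifts to a conformal class (equivalently, a complex structure) on $\widetilde{X}$ that is preserved by the deck involution (this is what gives the Klein surface structure in the sense used in the paper).

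Second, I would invoke the classical fact that \emph{every} closed Riemann surface of genus~$2$ is hyperelliptic: the canonical map of a genus-$2$ surface is a degree-$2$ map to $\mathbb{P}^1$, yielding a hyperelliptic involution. Combined with the definition preceding Theorem~\ref{theo:main} (a non-orientable surface is hyperelliptic if its orientable double cover is), this shows that every conformal class on $3\rp^2$ is hyperelliptic.

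With this verification in hand, Theorem~\ref{theo:main} applied to $n=3$ immediately yields
\[
\sys(3\rp^2)^2 \leq 1.333 \,\area(3\rp^2),
\]
as required. The only conceptual point worth emphasising in the write-up is the well-known hyperellipticity of genus~$2$; there is no genuine obstacle, as all the work is contained in Theorem~\ref{theo:main} itself.
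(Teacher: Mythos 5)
Your proposal is correct and follows essentially the same route as the paper: the corollary is deduced from Theorem~\ref{theo:main} at $n=3$ by noting that the orientable double cover of $3\rp^2$ is a genus-$2$ surface, every genus-$2$ Riemann surface is hyperelliptic, and hence every conformal class on Dyck's surface is hyperelliptic in the paper's sense. Your write-up merely makes explicit (Euler characteristic count, canonical map) what the paper states in one line before the corollary.
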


\forget
In \cite{KS1}, the following result was proved.

\begin{proposition}
\label{s>3}
Every metric in the conformal class of a hyperelliptic
surface~$\Sigma_g$ of genus~$g$ satisfies the
estimate~$\frac{\sys^2}{\area} \leq \frac{4}{g+1}$.
\end{proposition}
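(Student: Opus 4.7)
The plan is to exploit the hyperelliptic involution $J\colon\Sigma_g\to\Sigma_g$, whose $2g+2$ fixed points are the Weierstrass points $W_1,\dots,W_{2g+2}$, with quotient $\Sigma_g/\langle J\rangle\cong S^2$ branched over the $2g+2$ images. The central idea is to compare the systole of $\Sigma_g$ with distances between Weierstrass points on the quotient, using the fact that any short arc joining branch points on $S^2$ lifts to a short non-contractible loop on $\Sigma_g$.

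First, I would reduce to the case where the metric is $J$-invariant. Since $J$ is a biholomorphism, it preserves the conformal class, so any metric $g$ in the class satisfies $J^*g=\lambda g$ for some positive function $\lambda$. Setting $\bar g=\tfrac12(g+J^*g)=\tfrac{1+\lambda}{2}\,g$ produces a $J$-invariant metric in the same conformal class, and the conformal rescaling identity $\area(\lambda g)=\int\lambda\,dA_g$ gives $\area(\bar g)=\area(g)$. The pointwise inequality $\sqrt{(1+\lambda)/2}\geq(1+\sqrt\lambda)/2$ yields
\[
\ell_{\bar g}(\gamma)\;\geq\;\tfrac12\bigl(\ell_g(\gamma)+\ell_g(J\gamma)\bigr)\;\geq\;\sys(g)
\]
for every non-contractible loop $\gamma$ (since $J\gamma$ is also non-contractible). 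Hence $\sys(\bar g)\geq\sys(g)$, and it suffices to prove the inequality for $\bar g$.

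Second, the $J$-invariant metric descends to a metric $\tilde g$ on $S^2$ with $2g+2$ conical singularities of angle $\pi$ and total area $\tfrac12\area(g)$. For distinct branch points $p_i,p_j$, a minimizing arc $\alpha$ between them in $(S^2,\tilde g)$ lifts to a loop $\tilde\alpha$ on $\Sigma_g$ based at $W_i$ of length $2\,\ell(\alpha)$. This lift is non-contractible because $J$ acts as $-1$ on $H_1(\Sigma_g;\Q)$, so $\tilde\alpha$, viewed as an odd cycle in the double cover ramified at $p_i,p_j$, has non-trivial mod-$2$ intersection with a small loop encircling any third branch point. Consequently
\[
\sys(g)\;\leq\;2\min_{i\neq j}d_{\tilde g}(p_i,p_j).
\]

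Third, I would establish the packing bound $\min_{i\neq j}d_{\tilde g}(p_i,p_j)^2\leq\tfrac{\area(g)}{g+1}$, which combined with the previous step gives $\sys(g)^2\leq\tfrac{4}{g+1}\area(g)$. The main obstacle lies precisely here: a naive disk-packing on $(S^2,\tilde g)$ would require lower bounds on ball areas, unavailable since being in the conformal class imposes no curvature control on $\tilde g$. I would overcome this via conformal invariance, computing the extremal length of the family of arcs joining pairs of Weierstrass points in the canonical flat-cone representative of the conformal class determined by the hyperelliptic quadratic differential $\tfrac{dz^{2}}{\prod(z-a_i)}$, in which the packing is explicit and saturates the constant $\tfrac{4}{g+1}$; conformal invariance of extremal length then transfers the estimate to an arbitrary $\tilde g$ in the class, completing the proof.
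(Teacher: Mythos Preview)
Your first two steps are sound and match the approach of \cite{KS1} (to which this proposition is attributed): average by $J$, then use that an arc on $S^2$ between two branch points lifts to a noncontractible loop of twice the length, so the $2g+2$ Weierstrass points are pairwise at distance at least $L/2$ where $L=\sys$. (Your noncontractibility argument is garbled, though: a small circle around a third branch point $p_k\notin\alpha$ is disjoint from $\alpha$ on $S^2$, hence their preimages on $\Sigma_g$ are disjoint and the mod-$2$ intersection you invoke vanishes. A correct argument is that cutting $\Sigma_g$ along $\tilde\alpha$ yields the branched double cover of the disk $S^2\setminus\alpha$, which is connected since it still contains $2g\ge 2$ branch points; hence $\tilde\alpha$ is nonseparating.)

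The genuine gap is Step~3. The extremal length of your arc family depends on the conformal class, i.e.\ on the positions of the $a_i$, and you give no argument bounding it uniformly by $2/(g+1)$ over all hyperelliptic types. The asserted ``saturation'' is impossible, since $4/(g+1)$ is never sharp: already for $g=1$ it equals $2$, far above Loewner's $2/\sqrt{3}$, and your flat metric is then the pillowcase, for which $(\min_{i\ne j} d(p_i,p_j))^2/\area(S^2)\le\tfrac12$, not $1$; for $g\ge 2$ your differential acquires a zero of order $2g-2$ at $\infty$ and the resulting singular flat metric has no evident relation to the constant. What actually replaces your Step~3, and what the present paper invokes in Section~\ref{sec:last} for the non-orientable analogue, is the regularisation step \cite[Lemma~3.5]{KS1}: one may pass, only improving the systolic ratio and preserving $J$-invariance, to a metric in which every ball of radius $R\le L/2$ satisfies $\area(B(R))\ge 2R^2$. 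Then the $2g+2$ disjoint $L/4$-balls about the Weierstrass points immediately give $\area(\Sigma_g)\ge(2g+2)\cdot L^2/8=(g+1)L^2/4$. The missing ingredient is thus not a conformal one but this Gromov-type reduction to ``thick'' metrics.
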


Given a non-orientable surface with hyperelliptic double cover, we
apply Proposition~\ref{s>3} to conclude that the bound~\eqref{12b} is
satisfied whenever~$n\geq 7$.  It remains therefore to handle the
case~$n=3$.  
\forgotten

The proof of the main theorem exploits a variety of techniques ranging
from hyperellipticity to the coarea formula and cutting and pasting.
The current best upper bound for the systole on Dyck's surface only
differs by about~$30\%$ from the best known example given by a
suitable extremal hyperbolic surface (see Section~\ref{fourteen}).

We will exploit the following characterisation of the systole of a
non-orientable surface.  Given a metric on a Klein surface~$X=\tilde
X/\tau$ where~$\tau: \tilde X \to \tilde X$ is fixed point-free, we
consider the natural~$\tau$-invariant pullback metric on its
orientable double cover~$\tilde X$.

\begin{definition} 
The least displacement ``$\disp$'' of~$\tau:\Sigma_g\to\Sigma_g$ is
the number
\begin{equation}
\label{61b}
\disp(\tau)= \min \left\{ \dist(x,\tau(x)) \mid x\in \Sigma_g
\right\}.
\end{equation}
\end{definition}

\begin{proposition}
The systole of a Klein surface~$X$ can be expressed as the least of
the following two quantities:
\[
\sys(X) = \min \left\{ \sys(\tilde X), \disp(\tau) \right\}.
\]
\end{proposition}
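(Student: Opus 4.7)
The plan is to establish the equality by proving both inequalities, exploiting the dichotomy for lifts of loops under the two-sheeted local isometry $p \colon \tilde X \to X$ whose deck group is $\langle \tau \rangle$ acting freely.

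For the upper bound $\sys(X) \leq \min\{\sys(\tilde X), \disp(\tau)\}$, I would handle each candidate in turn. A shortest non-contractible loop $\tilde\gamma$ on $\tilde X$ descends via $p$ to a loop $\gamma$ on $X$ of the same length; since $p_*$ is injective on fundamental groups, $\gamma$ is non-contractible, so $\sys(X) \leq \sys(\tilde X)$. For the second candidate, pick $\tilde x \in \tilde X$ realising the minimum in \eqref{61b} and let $\tilde\alpha$ be a minimising path from $\tilde x$ to $\tau(\tilde x)$; the projection $\alpha = p \circ \tilde\alpha$ is a loop on $X$ of length $\disp(\tau)$ representing the non-trivial element of $\pi_1(X)/p_* \pi_1(\tilde X) \cong \Z/2$, hence non-contractible, so $\sys(X) \leq \disp(\tau)$.

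For the reverse inequality $\sys(X) \geq \min\{\sys(\tilde X), \disp(\tau)\}$, I would fix a shortest non-contractible loop $\gamma$ on $X$ (which exists by compactness of $X$) and lift it, starting at some $\tilde x \in \tilde X$, to a path $\tilde\gamma$ in $\tilde X$. If $[\gamma] \in p_*\pi_1(\tilde X, \tilde x)$ then $\tilde\gamma$ closes up to a loop at $\tilde x$; this loop must be non-contractible on $\tilde X$, for otherwise the injectivity of $p_*$ would force $\gamma$ to contract on $X$. Hence $\length(\gamma) = \length(\tilde\gamma) \geq \sys(\tilde X)$. In the complementary case, $\tilde\gamma$ terminates at $\tau(\tilde x)$, so $\length(\gamma) \geq \dist(\tilde x, \tau(\tilde x)) \geq \disp(\tau)$. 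Combining the two cases yields the lower bound.

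The only point that requires attention is the dichotomy itself, which encodes the short exact sequence $1 \to \pi_1(\tilde X) \to \pi_1(X) \to \Z/2 \to 1$ induced by the free involution $\tau$; since $\tau$ is assumed fixed point-free, this is standard, and all other ingredients are routine consequences of $p$ being a local isometry and $X$ being compact.
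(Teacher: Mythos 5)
Your proof is correct and follows essentially the same route as the paper: the core of both arguments is the lifting dichotomy for a systolic loop of $X$, which either closes up in $\tilde X$ (giving $\sys(\tilde X)$) or joins a $\tau$-orbit (giving $\disp(\tau)$). The paper leaves the easy upper bound $\sys(X) \leq \min\{\sys(\tilde X), \disp(\tau)\}$ implicit, and you have simply filled in that routine verification via injectivity of $p_*$ and the $\Z/2$ quotient of fundamental groups.
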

 
Indeed, lifting a systolic loop of~$X$ to~$\tilde X$, we obtain either
a loop in the orientable cover~$\Sigma_g$, or a path connecting two
points which form an orbit of the deck transformation~$\tau$.

Recall that the following four properties of a closed surface~$\Sigma$
are equivalent: (1)~$\Sigma$ is aspherical; (2) the fundamental group
of~$\Sigma$ is infinite; (3) the Euler characteristic of~$\Sigma$ is
non-positive; (4)~$\Sigma$ is not homeomorphic to either~$S^2$
or~$\RP^2$.  The following conjecture has been discussed in the
systolic literature, see~\cite{SGT}.

\begin{conjecture}
Every aspherical surface satisfies Loewner's bound
\begin{equation}
\label{loew}
\frac{\sys^2}{\area} \leq \frac{2}{\sqrt{3}}.
\end{equation}
\end{conjecture}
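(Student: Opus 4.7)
The plan is to proceed by case analysis based on the classification of closed aspherical surfaces. The torus case is Loewner's theorem~\eqref{11e}, and the Klein bottle case is settled by Bavard's stronger inequality~\eqref{11f}. It therefore remains to establish~\eqref{loew} for orientable surfaces~$\Sigma_g$ with~$g\geq 2$ and for non-orientable surfaces~$n\RP^2$ with~$n\geq 3$.

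For conformal classes admitting a hyperelliptic structure I would pursue the companion-surface strategy of the present paper: pass to the orientable double cover when necessary, exploit the hyperelliptic involution to produce a flat companion torus or Klein bottle, invoke the sharp Loewner or Bavard bound on the companion, and transplant length estimates back by means of the coarea inequality together with cut-and-paste arguments. Sharpening the constant obtained in~\eqref{12b} from~$1.333$ down to~$\tfrac{2}{\sqrt{3}}\approx 1.1547$ would require replacing the ball-packing and ad hoc triangle inequalities by integral-geometric averages; this should be feasible in principle, since the Loewner equality case, namely the flat equilateral torus, already appears naturally as a companion.

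For non-hyperelliptic conformal classes, which are generic once~$g\geq 3$ or~$n\geq 4$, I would replace the explicit involution by the Abel--Jacobi map into the Jacobian~$\jac(\Sigma_g)$, combining Loewner-type bounds on the image torus with a stable-to-unstable systole comparison and a coarea argument applied to the harmonic~$1$-forms that define the projection. A complementary route would be a compactness argument on moduli space together with continuity of the normalised systolic ratio, attempting to degenerate an extremiser toward a locus where the hyperelliptic technique applies and to extract a uniform bound by a limiting procedure.

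The principal obstacle, as the authors stress in the introduction, is that Gromov's bound~\eqref{11b} is \emph{sharp} in the Finsler category. Any proof of~\eqref{loew} must therefore leverage a strictly Riemannian feature — most naturally a pointwise Cauchy--Schwarz inequality for holomorphic~$1$-forms, or an eigenvalue estimate for the Hodge Laplacian — and convert it into a universal numerical gain over the Finsler-optimal constant~$\tfrac{4}{3}$. Producing such a gain uniformly across all genera, without any genus-dependent loss, is the essential difficulty, and is the reason why the conjecture remains open.
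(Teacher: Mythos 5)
The statement you are addressing is not a theorem of the paper but an open conjecture: the authors explicitly state that whether all aspherical surfaces satisfy Loewner's bound ``is still open in general,'' and the paper itself only establishes much weaker partial results (Bavard's bound for the Klein bottle, the genus~$2$ case from \cite{KS1}, asymptotic statements for large genus, and the bound $\sys^2\leq 1.333\,\area$ of Theorem~\ref{theo:main} for hyperelliptic non-orientable surfaces). Your text is accordingly not a proof but a research programme, and each of its three branches has a genuine gap at exactly the quantitative point that matters. For the hyperelliptic branch, the companion-surface/coarea/cut-and-paste machinery of this paper is engineered to beat Gromov's constant $\tfrac43$ by a razor-thin margin; the authors note that tracking optimal constants would yield only a ``slightly better bound,'' so there is no identified mechanism by which ``integral-geometric averages'' would push $1.333$ all the way down to $\tfrac{2}{\sqrt3}\approx 1.1547$ --- that is precisely the open problem, not a technical refinement. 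Moreover the Loewner-extremal flat torus does not literally ``appear as a companion'': the companion tori and Klein bottles carry singular pullback-type metrics coming from the branched projection, and only their sharp systolic inequalities, not their extremal geometry, are used.

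For the non-hyperelliptic branch, the Abel--Jacobi route controls the \emph{stable} systole (the stable norm of integral $1$-homology classes), and the passage from $\stsys$ to the homotopy systole $\sys$ is exactly where known arguments lose constants; it yields the asymptotic estimates of \cite{Gr1,KS2} (good for genus $\geq 20$ or so) but nothing approaching $\tfrac{2}{\sqrt3}$ in the intermediate genera $3\leq g\leq 19$, nor anything for the homotopy systole of $n\RP^2$ with small $n$, where one-sided curves are invisible to homology with $\Z$ coefficients. The compactness/degeneration route likewise has no driving mechanism: even granting existence of systolically extremal metrics on each surface, there is no reason an extremiser should degenerate toward the hyperelliptic locus, and no monotonicity of the normalised ratio along such a degeneration is available. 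Finally, your own closing sentence concedes that the uniform gain over the Finsler-optimal constant has not been produced; since that gain \emph{is} the conjecture, the proposal establishes nothing beyond what is already in the literature.
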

M. Gromov \cite{Gr1} proved an asymptotic estimate which implies that
every orientable surface of genus greater than~$50$ satisfies
Loewner's bound.  This was extended to orientable surfaces of genus at
least~$20$ in \cite{KS2}, and for the genus~$2$ surface in \cite{KS1}.

Recent publications in systolic geometry include Ambrosio \&
Katz~\cite{AK}, Babenko \& Balacheff \cite{BB10}, Balacheff {\em et
al.\/}~\cite{BPS}, Belolipetsky~\cite{Bel11}, El~Mir~\cite{Elm10},
Fetaya~\cite{Fe}, Katz {\em et al.\/}~\cite{KK2, KW, KSV2}, Makover \&
McGowan \cite{MM}, Parlier~\cite{Par10}, Ryu~\cite{Ry}, and
Sabourau~\cite{Sa11}.

In Section~\ref{six}, we will review the relevant conformal
information, including hyperellipticity.  In Section~\ref{two}, we
will present metric information in the context of a configuration of
the five surfaces appearing in our main argument.
Section~\ref{fourteen} exploits optimal inequalities of Blatter and
Sakai for the M\"obius band so to prove our first theorem for $n=3$.
We handle the remaining case, namely $n \geq 4$, in
Section~\ref{sec:last}.

\section{Review of conformal information and hyperellipticity}
\label{six}

In this section we review the necessary pre-metric (i.e., conformal)
information.  The quadratic equation~$y^2=p$ over~$\C$ is well known
to possess two distinct solutions for every~$p\not=0$, and a unique
solution for~$p=0$.  Now consider the locus (solution set) of the
equation
\begin{equation}
\label{11}
y^2= p(x)
\end{equation}
for~$(x,y)\in \C^2$ and generic~$p(x)$ of even degree~$2g+2$.  Such a
locus defines a Riemann surface which is a branched two-sheeted cover
of~$\C$.  The cover is obtained by projection to the~$x$-coordinate.
The branching locus corresponds to the roots of~$p(x)$. 

There exists a unique smooth closed Riemann surface~$\Sigma_g$
naturally associated with \eqref{11}, sometimes called the {\em smooth
completion\/} of the affine surface~\eqref{11}, together with
a holomorphic map
\begin{equation}
\label{12}
Q_g: \Sigma_g\to \hat \C=S^2
\end{equation}
extending the projection to the~$x$-coordinate.  By the
Riemann-Hurwitz formula, the genus of the smooth completion is~$g$,
where~$\deg(p(x))=2g-2$.  All such surfaces are hyperelliptic by
construction.  The hyperelliptic involution~$J: \Sigma_g \to \Sigma_g$
flips the two sheets of the double cover of~$S^2$ and has
exactly~$2g+2$ fixed points, called the Weierstrass points of
$\Sigma_g$.  The hyperelliptic involution is unique.  The
involution~$J$ can be identified with the nontrivial element in the
center of the (finite) automorphism group of $X$ (\cf
\cite[p.~108]{FK}) when it exists, and then such a~$J$ is unique
\cite[p.204]{Mi}.

A hyperelliptic closed Riemann surface~$\Sigma_g$ admitting an
orientation-reversing (antiholomorphic) involution~$\tau$ can always
be reduced to the form \eqref{11} where~$p(x)$ is a polynomial all of
whose coefficients are real, where the involution~$\tau: \Sigma_g \to
\Sigma_g$ restricts to complex conjugation on the affine part of the
surface in~$\C^2$, namely
\begin{equation}
\tau(x,y)=(\bar x, \bar y).
\end{equation}
The special case of a fixed point-free involution~$\tau$ can be
represented as the locus of the equation
\begin{equation}
\label{21}
-y^2 = \prod_j (x-x_j)(x-\bar x_j),
\end{equation}
where~$x_j\in \C\setminus \R$ for all~$j$.  Here the minus sign on the
left hand side ensures the absence of real solutions, and therefore
the fixed point-freedom of~$\tau$.  The uniqueness of the
hyperelliptic involution implies the following.

\begin{proposition}
\label{comm}
We have the commutation relation~$\tau \circ J = J \circ \tau$.
\end{proposition}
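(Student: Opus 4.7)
My plan is to exploit the uniqueness of the hyperelliptic involution, as foreshadowed in the sentence preceding the statement. I will show that the conjugate $\tau\circ J\circ\tau^{-1}$ is itself a hyperelliptic involution of $\Sigma_g$, so that by uniqueness it must equal $J$, which is equivalent to the desired commutation.

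First I would verify that $\tau\circ J\circ\tau^{-1}$ is a holomorphic self-map of $\Sigma_g$. Since $\tau$ and $\tau^{-1}=\tau$ are each antiholomorphic and $J$ is holomorphic, the composition reverses orientation twice and hence is holomorphic. Next, it is an involution:
\[
(\tau J\tau)\circ(\tau J\tau)=\tau J(\tau\tau)J\tau=\tau J^{2}\tau=\tau^{2}=\mathrm{id}.
\]
Finally, I would count its fixed points. A point $x\in\Sigma_g$ satisfies $\tau J\tau(x)=x$ if and only if $J(\tau(x))=\tau(x)$, i.e.\ $\tau(x)$ is a Weierstrass point. Since $\tau$ is a bijection, the fixed-point set of $\tau J\tau$ is exactly $\tau^{-1}$ of the Weierstrass set, and in particular consists of precisely $2g+2$ points.

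Now I would invoke the uniqueness of the hyperelliptic involution quoted in the paragraph before the statement (the nontrivial central element of $\mathrm{Aut}(\Sigma_g)$ for $g\geq 2$, cf.\ \cite[p.~108]{FK} and \cite[p.~204]{Mi}). Since $\tau J\tau$ is a holomorphic involution with the same $2g+2$ fixed-point count as $J$ and lies in the center of $\mathrm{Aut}(\Sigma_g)$ (conjugation by any automorphism preserves centrality), it must coincide with $J$. Hence $\tau J\tau=J$, which upon composing with $\tau$ on the right gives $\tau\circ J=J\circ\tau$.

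The only mild subtlety I anticipate is the genus~$1$ case (corresponding to $n=2$), where the hyperelliptic involution on the torus is only unique up to translations and so the abstract uniqueness argument requires a small adjustment — one would normalize $\tau J\tau$ by a translation and then appeal to the fact that, on a torus, the multiplication-by-$-1$ map is the unique holomorphic involution with four fixed points. For $g\geq 2$, which is the range relevant to $n\geq 3$ in the main theorem, the argument goes through verbatim and the whole proof reduces to the three lines above.
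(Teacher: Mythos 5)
Your proof is correct and follows exactly the route the paper intends: the sentence preceding the proposition (``The uniqueness of the hyperelliptic involution implies the following'') is the paper's entire argument, namely that $\tau\circ J\circ\tau^{-1}$ is again a holomorphic hyperelliptic involution and hence equals $J$. Your write-up simply fills in the details (holomorphicity, involutivity, the $2g+2$ fixed points, and the genus-one caveat), so it matches the paper's approach.
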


A Klein surface~$X$ is a non-orientable closed surface.  Such a
surface can be thought of as an antipodal quotient~$X=\Sigma_g/\tau$
of an orientable surface by a fixed point-free, orientation-reversing
involution~$\tau$.  The pair~$(\Sigma_g,\tau)$ is known as a real
Riemann surface.  A Klein surface is homeomorphic to the connected sum
\[
X=n\RP^2 =\underbrace{\RP^2\#\ldots\#\RP^2}_n,
\]
of~$n$ copies of the real projective plane.  The case~$n=2$
corresponds to the Klein bottle~$K=2\RP^2$, covered by the torus.  In
the case~$n=3$, we obtain the surface~$3\RP^2$ of Euler
characteristic~$-1$, whose orientable double cover is the genus~$2$
surface~$\Sigma_2$.

In the sequel, we will focus on Dyck's surface $3\rp^2$ since the
proof of the main theorem in this case requires special arguments.

We will denote by~$\tilde X$ the orientable double cover of a Klein
surface~$X$.  If a surface~$X$ is hyperelliptic, we will denote by
$\Bra(X) \subset \hat \C$ its branch locus, i.e., the set of isolated branch points
of the double cover~$X\to \hat \C$.

\begin{definition}
A Klein bottle~$K$ is called a {\em companion\/} of a Klein surface
$X$ if we have the inclusion of the branch loci at the level of
the orientable double covers:
\[
\Bra(\tilde K) \subset \Bra(\tilde X)\subset \hat \C.
\]
\end{definition}

\begin{proposition}
\label{com}
Each Klein surface~$K$ homeomorphic to~$3\RP^3$ admits a triplet of
companion Klein bottles~$K_1, K_2, K_3$ satisfying the following three
conditions:
\begin{enumerate}
\item
$|\Bra(\tilde K_j)|=4$;
\item
$|\Bra(\tilde K_i) \cap \Bra(\tilde K_j)|=2$ for~$i\not=j$;
\item
$\cup_{j=1}^3 \Bra(\tilde K_j) = \Bra (\tilde X)$.
\end{enumerate}
\end{proposition}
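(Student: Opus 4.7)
The plan is to exploit the hyperelliptic normal form for $(\tilde X, \tau)$ recalled in equation~\eqref{21}. Since $\tilde X$ is the orientable double cover of $X = 3\RP^2$, it has genus $2$, and the hyperelliptic involution $J$ is unique and commutes with $\tau$ by Proposition~\ref{comm}. As $\tau$ is fixed point-free, we may place the model in the form
\[
-y^2 = \prod_{j=1}^{3}(x - x_j)(x - \bar x_j), \qquad x_j \in \C \setminus \R,
\]
with $\tau(x,y) = (\bar x, \bar y)$. In particular, the six Weierstrass points project to the branch locus
\[
\Bra(\tilde X) = \{x_1, \bar x_1, x_2, \bar x_2, x_3, \bar x_3\} \subset \hat\C,
\]
which partitions naturally into three complex conjugate pairs.

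Given this partition into three pairs, the construction is combinatorial: for each $j \in \{1,2,3\}$, let $\{a,b\} = \{1,2,3\} \setminus \{j\}$ and define the affine curve $C_j \subset \C^2$ by
\[
-y^2 = (x - x_a)(x - \bar x_a)(x - x_b)(x - \bar x_b),
\]
with smooth completion $\tilde K_j$. First I would check, via the Riemann--Hurwitz formula applied to the degree~$2$ map $\tilde K_j \to \hat\C$ branched over $4$ points, that $\tilde K_j$ has genus $1$, i.e.\ is a torus. Next, the antiholomorphic involution $\tau_j(x,y) = (\bar x, \bar y)$ on $C_j$ is fixed point-free: for real $x$ the right hand side is non-negative, whereas $-y^2 \leq 0$ forces $y = 0$ and some $x = x_k$, contradicting $x_k \notin \R$. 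Hence $K_j := \tilde K_j / \tau_j$ is a genuine Klein bottle, and by construction its branch locus
\[
\Bra(\tilde K_j) = \{x_a, \bar x_a, x_b, \bar x_b\}
\]
is contained in $\Bra(\tilde X)$, so $K_j$ is a companion of $X$ in the sense of the definition preceding the proposition.

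Finally I would verify the three numerical conditions directly from this description. Condition (1) is immediate since $|\Bra(\tilde K_j)| = 4$. For (2), if $i \neq j$ the sets $\Bra(\tilde K_i)$ and $\Bra(\tilde K_j)$ omit different conjugate pairs, so their intersection consists of exactly the one remaining pair, giving $2$ points. For (3), every conjugate pair appears in at least one of the subsets (in fact in two of them), so the union equals $\Bra(\tilde X)$. There is no serious obstacle here beyond the initial setup; the only point requiring care is justifying that each $\tilde K_j$ together with $\tau_j$ really yields a Klein bottle rather than a torus, and this is handled by the sign argument above, which is the exact analogue of the one used to justify equation~\eqref{21} itself.
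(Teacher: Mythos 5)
Your proposal is correct and follows essentially the same route as the paper: starting from the real normal form~\eqref{21}, you select two of the three conjugate pairs of branch points, form the companion curve $-y^2=(x-x_a)(x-\bar x_a)(x-x_b)(x-\bar x_b)$, identify it as a torus via Riemann--Hurwitz, and quotient by complex conjugation to obtain the three companion Klein bottles, with the branch-locus conditions (1)--(3) following combinatorially. The only additions are spelled-out details (the sign argument for fixed-point-freedom and the explicit check of the three conditions) that the paper leaves implicit.
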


\begin{proof}
Given a real Riemann surface~$(\Sigma_g, \tau)$, consider the
presentation~\eqref{21} with~$p(x)$ a real polynomial.  We can write
the roots of~$p$ as a collection of conjugate pairs~$(a, \bar a)$.
Thus in the genus~$2$ case, the affine form of the surface is the
locus of the equation
\begin{equation}
\label{51}
-y^2 = (x-a)(x-\bar a) (x-b)(x-\bar b) (x-c)(x-\bar c)
\end{equation}
in~$\C^2$.  Choosing two conjugate pairs, for instance~$(a, \bar a, b,
\bar b)$, we can construct a companion surface
\begin{equation}
\label{52}
-y^2 = (x-a)(x-\bar a) (x-b)(x-\bar b),
\end{equation}
By the Riemann-Hurwitz formula, its genus is one, and therefore the
(smooth completion of the) companion surface is a torus.  We will
denote it~$\T_{a,b}$.  By construction, its set of zeros
is~$\tau$-invariant.  In other words, the (affine part in~$\C^2$ of
the) torus is invariant under the action of complex conjugation.
Thus, the surface~$\T_{a,b}/\tau$ is a Klein bottle~$K$, namely, a
companion Klein bottle of the original Klein
surface~$3\RP^2=\Sigma_2/\tau$ stemming from~$\eqref{51}$.  We thus
obtain the three Klein bottles~$\T_{a,b}/\tau$,~$\T_{b,c}/\tau$, and
$\T_{c,a}/\tau$, proving the proposition.
\end{proof}

The maps constructed so far can be represented by the following
diagram of homomorphisms (note that two out of the four arrows point
in the leftward direction):
\begin{equation}
\label{53}
3\RP^2 \leftarrow \Sigma_2 \to S^2 \leftarrow \T_{a,b} \to K .
\end{equation}
Complex conjugation~$\tau$ on~$\hat \C = S^2$ fixes a circle called
the equatorial circle, denoted~$\hat\R\subset\hat\C$.

\begin{definition}
The {\em equator\/} of~$\Sigma_2$ is the circle given by the inverse
image of the equator~$\hat \R$ under the hyperelliptic
projection~$\Sigma_2 \to S^2$.
\end{definition}

\begin{lemma}
The equator of~$\Sigma_2$ coincides with the fixed point set of the
composition~$\tau \circ J$.  The equator is invariant under the action
of~$\tau$.  The action of~$\tau$ on the equator of~$\Sigma_2$ is fixed
point-free.
\end{lemma}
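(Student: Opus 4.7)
The plan is to work in the affine model from equation~\eqref{21}, $-y^2=\prod_j(x-x_j)(x-\bar x_j)$. In these coordinates $J(x,y)=(x,-y)$, $\tau(x,y)=(\bar x,\bar y)$, and the hyperelliptic projection is $Q_2(x,y)=x$. A direct computation gives
\[
\tau\circ J(x,y)=(\bar x,-\bar y),
\]
so the fixed locus of $\tau\circ J$ on the affine part is $\{x\in\R,\, y\in i\R\}$.

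For the first claim I would establish both inclusions at once: the affine equator $Q_2^{-1}(\hat\R)$ consists of the points with real $x$-coordinate; but when $x\in\R$ the right-hand side of~\eqref{21} equals $\prod_j|x-x_j|^2\geq 0$, which forces $y^2\leq 0$, i.e., $y$ is automatically purely imaginary. Hence the affine parts of the equator and of $\mathrm{Fix}(\tau\circ J)$ coincide. The two points over $x=\infty$ require a separate check in a chart $(u,v)=(1/x,\,y/x^{g+1})$: the defining equation becomes $-v^2=\tilde p(u)$ with $\tilde p(0)=1$, so the two points at infinity are $(0,\pm i)$, which visibly lie in both sets.

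For the $\tau$-invariance, the key ingredient is the commutation relation $\tau\circ J=J\circ\tau$ of Proposition~\ref{comm}. If $\tau Jp=p$, then applying $\tau$ yields $Jp=\tau p$, and hence
\[
\tau J(\tau p)=(\tau J\tau)p=J\tau^2 p=Jp=\tau p,
\]
so $\tau p$ also lies in the equator. The third claim is immediate: $\tau$ is fixed point-free on all of $\Sigma_2$ by the definition of a Klein surface. The only subtle point in the whole argument is handling the two points over infinity, where one must verify that $\tau$ actually swaps them rather than fixing them; this is forced by the negative sign in front of $y^2$ in~\eqref{21}, making $v=\pm i$ at infinity so that $(u,v)\mapsto(\bar u,\bar v)$ exchanges the two preimages.
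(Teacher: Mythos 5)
Your argument is correct and is essentially the verification the paper has in mind: the paper's entire proof is the assertion that the lemma is immediate from the commutation relation $\tau\circ J=J\circ\tau$ of Proposition~\ref{comm}, and your coordinate computation in the real model \eqref{21}, combined with that same commutation relation for the invariance statement, simply spells this out. The only extra material is your explicit check at the two points over $x=\infty$ (and the observation that $\tau$ swaps them), which is accurate and fills in a detail the paper leaves implicit.
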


\begin{proof}
The lemma is immediate from Proposition~\ref{comm}.
\end{proof}

Similarly, we obtain the following.

\begin{lemma} \label{lem:circle}
Relative to the double cover~$\T_{a,b}\to\hat\C$, the inverse image of
the equatorial circle~$\hat\R\subset \hat \C$ is a pair of disjoint
circles, the involution~$\tau$ acts on the torus by switching the two
circles, while the involution~$\tau \circ J$ fixes both circles
pointwise.
\end{lemma}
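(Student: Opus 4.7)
The plan is to work directly with the affine equation \eqref{52} defining $\T_{a,b}$, namely
\[
-y^2 = (x-a)(x-\bar a)(x-b)(x-\bar b),
\]
together with the explicit formulas $\tau(x,y)=(\bar x,\bar y)$ and $J(x,y)=(x,-y)$. The first step is to describe the preimage $Q^{-1}(\hat\R)$, where $Q\colon \T_{a,b}\to\hat\C$ is the hyperelliptic projection $(x,y)\mapsto x$. For $x\in\R$, the right-hand side of the defining equation equals $|x-a|^2|x-b|^2$, which is non-negative and vanishes only at $x=a,\bar a,b,\bar b$; since these four points lie in $\C\setminus\R$, the product is in fact strictly positive on $\hat\R$. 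Hence $y$ is purely imaginary, and one may write $y=\pm i\,|x-a||x-b|$.

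Next I would use this to count components. Because the branch locus $\{a,\bar a,b,\bar b\}$ is disjoint from $\hat\R$, the double cover $Q$ is unramified over $\hat\R\cong S^1$, so its preimage is either a single circle double-covering $\hat\R$ or two disjoint circles each mapping homeomorphically. The explicit parametrization $x\mapsto(x,\pm i|x-a||x-b|)$ shows the two sign choices give continuous sections over all of $\R$, and a local check at $x=\infty$ (using the standard chart for the smooth completion, where $\infty$ is again unramified since $\deg p=4$ is even) shows that each section extends to a closed curve over $\hat\R$. Thus the preimage consists of two disjoint circles $C_+, C_-$ distinguished by the sign of $\Imag(y)$.

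The remaining two assertions are now formal. For $\tau$: a point of $C_\pm$ has $x\in\hat\R$ (so $\bar x=x$) and $y=\pm i|x-a||x-b|$, hence $\bar y=\mp i|x-a||x-b|$, so $\tau(C_+)=C_-$ and vice versa, proving that $\tau$ switches the two circles. For $\tau\circ J$: the same point is sent to $(\bar x,-\bar y)=(x,\pm i|x-a||x-b|)$, which is itself, showing that $\tau\circ J$ fixes $C_+$ and $C_-$ pointwise.

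The only delicate point I anticipate is making sure the component count is correct, i.e.\ that the preimage is genuinely two disjoint circles rather than a single doubly-covered circle; this is what the disjointness of the branch locus from $\hat\R$ guarantees, but it deserves to be stated explicitly so that the behavior at $x=\infty$ in the smooth completion is not overlooked. Once this is in place, the rest is immediate from the defining formulas for $\tau$ and $J$, so no further analytic input is needed.
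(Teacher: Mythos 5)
Your proof is correct and is essentially the paper's own (implicit) argument: the paper states this lemma without proof, as following ``similarly'' from the real model \eqref{52} with $\tau(x,y)=(\bar x,\bar y)$ and the sheet interchange $J(x,y)=(x,-y)$, which is exactly the computation you spell out. The only point to phrase carefully is the component count: disjointness of the branch locus from $\hat\R$ by itself only makes the cover unramified over the equator, and the two-circle conclusion really comes from your explicit sections $y=\pm i\,|x-a|\,|x-b|$ closing up at infinity (in the chart $u=1/x$, $v=y/x^2$ one gets $v\to i$ at both real ends for the $+$ section, and $v\to -i$ for the $-$ section), equivalently from the fact that each hemisphere contains exactly two of the four branch points, so the monodromy along $\hat\R$ is trivial.
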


A loop on a non-orientable surface is called~$2$-sided if its tubular
neighborhood is homeomorphic to an annulus, and~$1$-sided if its
tubular neighborhood is homeomorphic to a M\"obius band.  The following
lemma is immediate from the homotopy lifting property.

\begin{lemma}
\label{ed}
Let~$X$ be a Klein surface, and~$\tilde X$ its orientable double
cover.  Let~$\gamma\subset X$ be a one-sided loop, and~$\delta \subset
X$ a~$2$-sided loop in~$X=\tilde X/\tau$.  We have the following four
properties:
\begin{enumerate}
\item
Lifting~$\gamma$ to~$\tilde X$ yields a path on~$\tilde X$ connecting
a pair of points which form an orbit of the involution~$\tau$;
\item
lifting~$\delta$ to~$\tilde X$ yields a closed curve on~$X$;
\item
the inverse image of~$\gamma$ under the double cover~$\tilde X \to X$
is a circle (i.e. has a single connected component homeomorphic to a
circle);
\item
the inverse image of~$\delta$ under the double cover~$\tilde X \to X$
has a pair of connected components (circles).
\end{enumerate}
\end{lemma}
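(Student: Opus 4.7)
The plan is to identify the orientable double cover $p \colon \tilde X \to X$ with the orientation double cover, classified by the orientation homomorphism $w \colon \pi_1(X) \to \Z/2$. By the very definition of one- versus two-sidedness in terms of the tubular neighborhood (M\"obius band versus annulus), a loop $\alpha \subset X$ is $2$-sided if and only if $w([\alpha])=0$, and $1$-sided if and only if $w([\alpha])=1$.

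First, I would handle assertions (1) and (2) simultaneously by path-lifting. Fix a lift of the loop starting at a basepoint $x_0 \in p^{-1}(\alpha(0))$. The endpoint lies in the fiber $\{x_0, \tau(x_0)\}$, and by standard covering-space theory it equals $x_0$ if and only if $w$ vanishes on the class of the loop. For the $2$-sided loop $\delta$ one has $w=0$ and hence a closed lift $\tilde\delta$, proving (2); for the $1$-sided loop $\gamma$ one has $w=1$, so the lift $\tilde\gamma$ is a path from $x_0$ to $\tau(x_0)$, proving (1).

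Next, for (3), I would use the lift $\tilde\gamma$ from $x_0$ to $\tau(x_0)$ just produced. Translating by the deck transformation $\tau$ yields a second lift $\tau\circ\tilde\gamma$, running from $\tau(x_0)$ back to $\tau^{2}(x_0)=x_0$. The concatenation $\tilde\gamma \cdot (\tau\circ\tilde\gamma)$ is therefore a single closed curve in $\tilde X$ projecting to $\gamma$ traversed twice, and it exhausts $p^{-1}(\gamma)$ on degree grounds, so $p^{-1}(\gamma)$ is a single embedded circle. For (4), the lift $\tilde\delta$ is already closed, and $\tau(\tilde\delta)$ is a second closed curve; I would argue these are disjoint, because a shared point $y$ would force both $y$ and $\tau(y)$ to lie on $\tilde\delta$, hence $\tilde\delta$ would be $\tau$-invariant as a set and would already double-cover $\delta$, contradicting the fact that we chose $\tilde\delta$ to be a single lift of $\delta$ parametrized by $\delta$. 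Hence $p^{-1}(\delta) = \tilde\delta \sqcup \tau(\tilde\delta)$ is a disjoint union of two circles.

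The main obstacle, such as it is, lies in item (4): ruling out a single $\tau$-invariant connected preimage requires the small logical observation above. Everything else is the routine covering-space bookkeeping that the authors summarize by the phrase \emph{immediate from the homotopy lifting property}.
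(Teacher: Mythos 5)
Your argument is correct and is exactly the covering-space bookkeeping the paper has in mind when it dismisses the lemma as ``immediate from the homotopy lifting property'': the orientable double cover is classified by the orientation character $w$, one-sidedness corresponds to $w=1$, so lifts close up or not accordingly, and (3)--(4) follow by taking the $\tau$-translate of a chosen lift. The only cosmetic point is in (4), where the clean contradiction is simply that a closed lift of an embedded loop meets each fibre $\{y,\tau(y)\}$ in exactly one point, so it cannot contain both $y$ and $\tau(y)$; the intermediate claim of $\tau$-invariance is unnecessary.
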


The following blend of topological and hyperelliptic information will
be helpful in the sequel.  The upperhalf plane in~$\C$ is a
fundamental domain for the action of complex conjugation~$\tau$.
Hence points on the Klein surface~$3\RP^2= \Sigma_2/\tau$ can be
represented by points in the closure of the upperhalf plane.  Consider
the northern hemisphere
\[
\hat\C^+\subset \hat\C=S^2,
\]
with the equator included.  We will think of the surface~$3\RP^2$ as a
double cover
\begin{equation}
\label{dc}
3\RP^2\to \hat\C^+ .
\end{equation}
The double cover~\eqref{dc} is branched along the equator as well as
at three additional Weierstrass points, corresponding to the
points~$a,b,c$ of the usual hyperelliptic~cover~$\Sigma_2\to\hat\C$.
We also have an analogue of the hyperelliptic involution, namely the
deck transformation
\begin{equation}
\label{deck}
J: 3\RP^2 \to 3\RP^2,
\end{equation}
fixing the equator and the three Weierstrass points.  Note that the three
remaining Weierstrass points~$\bar a, \bar b, \bar c\in \Sigma_2$ are
mapped to~$a,b,c$ by the involution~$\tau$.

\begin{lemma}
A simple loop~$\Delta\subset \hat\C^+$ parallel to the equator
decomposes the northern hemisphere into a union of a disk~$D$
``north'' of~$\Delta$ and an annulus~$A$ ``south'' of~$\Delta$:
\begin{equation}
\label{ann} 
\hat\C^+= D\cup_{\Delta} A,
\end{equation}
where~$a,b,c\in D$ (all the branch points of~$\hat\C^+$ are in the
disk~$D$).  The corresponding decomposition of~$3\RP^2$ is
\begin{equation}
\label{28}
3\RP^2 = \Sigma_{1,1} \cup_{S^1}^{\phantom{I}} \Mob,
\end{equation}
where~$\Sigma_{1,1}$ is the once-holed torus, and~$\Mob$, the M\"obius
band.
\end{lemma}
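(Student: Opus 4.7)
The decomposition of $\hat\C^+$ itself is topologically immediate: $\hat\C^+$ is a closed $2$-disk with boundary $\hat\R$, and any simple loop $\Delta$ in the interior that is parallel to the equator separates it into a smaller closed disk $D$ (which we arrange so as to contain all three interior branch points $a,b,c$) and an annulus $A$ with $\partial A = \Delta \sqcup \hat\R$. The real work lies in identifying the two pieces of $3\RP^2$ that lie over $D$ and $A$ under the branched double cover $\tilde q\colon 3\RP^2 \to \hat\C^+$ of \eqref{dc}. I begin by pinning down the common boundary $\tilde q^{-1}(\Delta)$: since $\Delta$ lies off the branch locus, its preimage is an unramified double cover of a circle; and since $\Delta$ bounds the disk $D$ which contains exactly the three interior branch points, the monodromy of $\tilde q$ around $\Delta$ equals the product of the three nontrivial local monodromies, namely $(-1)^3 = -1$. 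Hence $\tilde q^{-1}(\Delta)$ is a \emph{single} circle, so both $\tilde q^{-1}(D)$ and $\tilde q^{-1}(A)$ have exactly one boundary component, glued along this common circle.

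To identify $\tilde q^{-1}(D)$, I lift once more to $\Sigma_2$ via the unramified double cover $\pi\colon \Sigma_2 \to 3\RP^2$. Because $D \subset \hat\C^+$ avoids the equator, the sets $D$ and $\bar D$ are disjoint in $\hat\C$, so $q^{-1}(D)$ and $q^{-1}(\bar D)$ are disjoint in $\Sigma_2$ and together form $\pi^{-1}(\tilde q^{-1}(D))$. Hence $\pi$ restricts to a \emph{homeomorphism} $q^{-1}(D) \cong \tilde q^{-1}(D)$. Now $q^{-1}(D)$ is a connected (by the same monodromy argument) branched double cover of a disk ramified at three interior points, lying inside the orientable surface $\Sigma_2$; its Euler characteristic is $2\cdot 1 - 3 = -1$ and it has a single boundary circle. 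The only compact connected orientable surface with these invariants is $\Sigma_{1,1}$, so $\tilde q^{-1}(D) \cong \Sigma_{1,1}$.

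For the complementary piece, Euler characteristic additivity gives $\chi(\tilde q^{-1}(A)) = \chi(3\RP^2) - \chi(\Sigma_{1,1}) = -1 - (-1) = 0$, and we have already seen that it has exactly one boundary component. The only compact connected surfaces with these invariants are the annulus and the M\"obius band. An annulus is orientable; were $\tilde q^{-1}(A)$ an annulus, gluing it along a single circle to the orientable $\Sigma_{1,1}$ would produce an orientable closed surface, contradicting the non-orientability of $3\RP^2$. Therefore $\tilde q^{-1}(A) \cong \Mob$, yielding the decomposition \eqref{28}. I expect the only genuine obstacle to be the monodromy bookkeeping: correctly identifying the single-circle lift $\tilde q^{-1}(\Delta)$, and correctly transferring orientability of the cover down from $\Sigma_2$ to $3\RP^2$ via the homeomorphism $q^{-1}(D)\cong \tilde q^{-1}(D)$; everything else reduces to classification of surfaces by Euler characteristic, boundary count and orientability.
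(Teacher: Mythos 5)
The paper itself offers no proof of this lemma: it is stated in the topological review of Section~\ref{six} and treated as evident from the description of $3\RP^2\to\hat\C^+$ as a double cover branched along the equator and at the three points $a,b,c$ (cf.\ Figure~\ref{hemisphere}), with the text passing directly to the ``annulus decomposition'' terminology. Your argument is a correct, careful verification of exactly that picture: the mod~$2$ monodromy of $\Delta$ is $(-1)^3=-1$, so $\tilde q^{-1}(\Delta)$ is a single circle; since $D$ and $\bar D$ are disjoint in $\hat\C$, the covering projection $\Sigma_2\to 3\RP^2$ restricts to a homeomorphism $Q^{-1}(D)\cong \tilde q^{-1}(D)$, and Riemann--Hurwitz plus the classification of surfaces (connected, orientable, $\chi=-1$, one boundary circle) identifies this piece with $\Sigma_{1,1}$; the complementary piece then has $\chi=0$, and its only boundary is $\tilde q^{-1}(\Delta)$ because the cover folds along the equator, so the preimage of $\hat\R$ lies in its interior. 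Two minor remarks, neither affecting validity. First, your final case distinction rests on a misstatement: a compact connected surface with $\chi=0$ and exactly \emph{one} boundary circle is already forced to be a M\"obius band, since an annulus has two boundary circles; the orientability detour is therefore superfluous (though the fact you invoke there --- that gluing two orientable surfaces along a single circle forming the full boundary of each yields an orientable closed surface --- is correct). Second, you implicitly assume $\tilde q^{-1}(A)$ is connected; this is immediate, since a component of it with empty boundary would be open and closed in the connected surface $3\RP^2$, but it deserves a sentence.
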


We will refer to this decomposition as the annulus decomposition,
since it is the non-orientable analogue of the
decomposition~\eqref{ann}, see Figure~\ref{hemisphere}.
In terms of the connected sum notation, the decomposition~\eqref{28}
corresponds to the topological decomposition~$3\RP^2 = \T^2 \# \RP^2$.

\begin{figure}[htbp]
\setlength\unitlength{1pt}
\begin{picture}(0,0)(0,0)
\put(140,-120){{\Large $\Delta$}}

\put(0,-60){$\bullet$} \put(8,-55){{\Large $c$}}

\put(-20,-75){$\bullet$} \put(-32,-80){{\Large $a$}}

\put(30,-70){$\bullet$} \put(38,-75){{\Large $b$}}

\end{picture}

\begin{center}
\includegraphics[height=90mm]{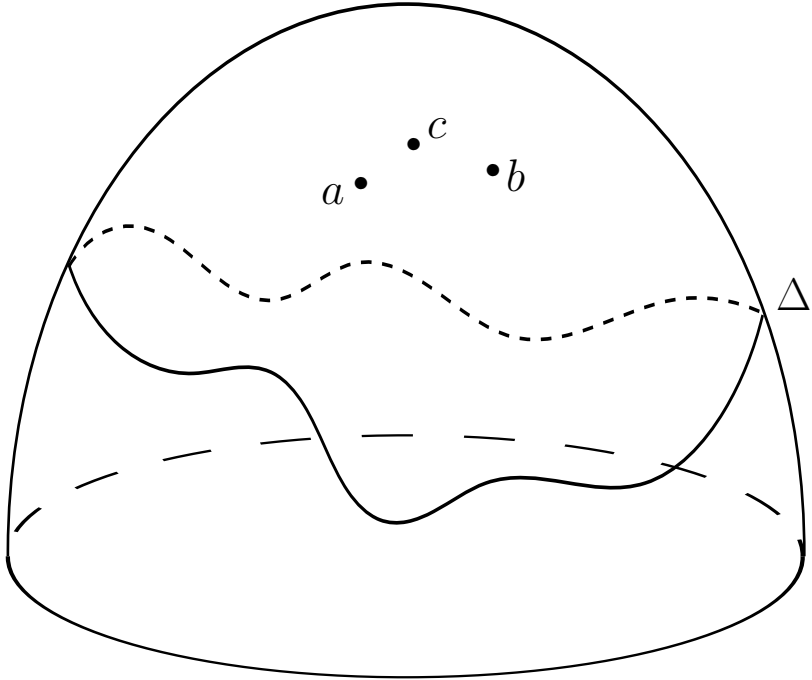}
\end{center}
\caption{Klein surface as a hemispherical double cover}
\label{hemisphere}
\end{figure}

The main idea in the proof of the main theorem is to reduce the
general situation to a case of a {\em metrically controlled\/} annulus
decomposition of the Klein surface, as explained in
Proposition~\ref{141} below.

\section{Reduction to the annulus decomposition}
\label{two}

A noncontractible loop on~$3\RP^2$ satisfying Bavard's
bound~\eqref{11f} will be called a Bavard loop.

\begin{proposition}
\label{141}
The Klein surface~$X=3\RP^2$ either contains a Bavard loop, or
admits a simple loop~$\delta\subset X$ which separates it as follows:
\[
X=\Sigma_{1,1}\cup_{\delta}^{\phantom{I}}\Mob,
\]
such that moreover
\begin{enumerate}
\item
the surface~$\Sigma_{1,1}$ contains the three Weierstrass points;
\item
the M\"obius band~$\Mob$ contains the equatorial circle;
\item
the loop~$\delta\subset X$ is~$J$-invariant;
\item
the loop~$\delta$ is of length at most~$2C_{{\rm
Bavard}}\sqrt{\area}$;
\item
the loop~$\delta$ double-covers a loop~$\Delta\subset\hat\C^+$ which
lifts to a systolic loop on a companion torus.
\end{enumerate}
\end{proposition}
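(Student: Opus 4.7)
We prove the dichotomy by showing that if $X$ contains no Bavard loop, then $\delta$ exists; so assume every noncontractible loop has length exceeding $C_{{\rm Bavard}}\sqrt{\area(X)}$. The strategy is to apply the coarea inequality on the quotient $\hat\C^+$ to the distance function from the real locus. Lift the metric of $X$ to a $\tau$-invariant metric $\tilde g$ on $\Sigma_2$, and average over $J$ (which commutes with $\tau$ by Proposition~\ref{comm}) to obtain a metric $g' = \tfrac12(\tilde g + J^*\tilde g)$ invariant under the Klein four-group $\langle J,\tau\rangle$, with area comparable to $\area(X)$. Push $g'$ down to a cone metric $h$ on $\hat\C^+ = \Sigma_2/\langle J,\tau\rangle$, whose cone locus consists of the three Weierstrass points $a,b,c$ together with the equator~$\hat\R$.

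Apply the coarea inequality to the Lipschitz function $f(z) = d_h(z, \hat\R)$ on $\hat\C^+$:
\[
\int_0^T \length_h(\{f = t\}) \, dt \;\leq\; \area_h(\hat\C^+),
\]
where $T = \max f$. The no-Bavard hypothesis lower-bounds $T$: otherwise a short arc from a Weierstrass point to $\hat\R$ would lift via $X \to \hat\C^+$ to a short noncontractible loop in $X$, contradicting the assumption. Combined with the coarea estimate, there exists a regular value $t_0 \in (0,T)$ for which $\Delta := \{f = t_0\}$ is a simple loop parallel to the equator, separating $\{a,b,c\}$ from~$\hat\R$, and of length at most $C_{{\rm Bavard}}\sqrt{\area(X)}$. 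Define $\delta$ as the preimage of $\Delta$ in $X$; since $\Delta$ encloses the odd number three of branch points of $X\to\hat\C^+$, the monodromy around $\Delta$ is nontrivial and $\delta$ is a single simple closed curve double-covering $\Delta$, so $\length(\delta) = 2\length(\Delta) \leq 2 C_{{\rm Bavard}}\sqrt{\area(X)}$.

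Properties (1)--(3) follow from the annulus decomposition~\eqref{28}, and property (4) holds since $\delta$ is the full preimage of $\Delta$ under the deck involution $J$ of $X\to\hat\C^+$. For property (5), lift $\Delta$ through $\T_{a,b} \to S^2 \to \hat\C^+$: since $\Delta$ encloses the two Weierstrass points $a,b$ lying in $\Bra(\tilde K_{a,b})$ and omits the other two, the lift consists of equator-parallel simple closed curves on the companion torus $\T_{a,b}$, and the minimality of $\length_h(\{f = t_0\})$ among parallel level curves, together with the no-Bavard hypothesis excluding any shorter transverse loop on $\T_{a,b}$, makes these lifts systolic on $\T_{a,b}$. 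The main obstacles are the quantitative lower bound on $T$ from the no-Bavard hypothesis and the analogous transplantation argument in (5), both of which require carefully tracing how short loops on companion surfaces correspond to short noncontractible loops on $X$ through the diagram~\eqref{53}, while controlling the area loss under the $J$-symmetrization of $\tilde g$.
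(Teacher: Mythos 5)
There is a genuine gap at the central step. Your coarea/mean-value argument applied to the distance function from the equator only bounds the \emph{total} length of some level set $\{f=t_0\}$; it does not produce a single component that is simple, ``parallel to the equator'', and separates all three branch points $a,b,c$ from $\hat\R$. Under the no-Bavard hypothesis the only loops in $\hat\C^+$ forced to be long are those whose preimage in $X$ is noncontractible: loops enclosing exactly two of $a,b,c$ (length $>C_{{\rm Bavard}}$) and loops enclosing all three (length $>\tfrac12 C_{{\rm Bavard}}$). Loops enclosing a \emph{single} branch point lift to contractible circles in $X$ and may be arbitrarily short. So in a ``three thin fingers'' geometry, where each Weierstrass point sits at the tip of a long thin tube attached near the equator, the level sets at most heights consist of three short circles, one around each branch point; the coarea estimate is then satisfied without there being any short level component enclosing all three, and no level curve at all need enclose all three with length $\leq C_{{\rm Bavard}}\sqrt{\area}$. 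The separation property (items (1)--(2)) is exactly what has to be proved, and the coarea inequality does not deliver it. Property (5) is a second gap: a level curve of $f$ has no reason to lift to a \emph{systolic} loop of a companion torus; the systole of $\T_{a,b}$ may be realized in a different free homotopy class (e.g.\ by a curve projecting to a loop around a conjugate pair $\{a,\bar a\}$, crossing the equator), and your appeal to ``minimality among level curves plus the no-Bavard hypothesis'' does not rule this out. Since (5) is what later sections use (e.g.\ Lemma~\ref{53b} compares level-curve components with $\Delta$ via the companion torus), it cannot be waved through.

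The paper's proof runs in the opposite direction and avoids both problems: $\Delta$ is not found by coarea but is \emph{by construction} the projection of a Bavard systolic loop on a companion Klein bottle $K_{a,b}=\T_{a,b}/\tau$ (Proposition~\ref{com} and inequality~\eqref{11f}), which immediately gives $\length(\Delta)\leq C_{{\rm Bavard}}\sqrt{\area}$, hence (4), and gives (5) since a $2$-sided systolic loop of $K_{a,b}$ lifts to a systolic loop of $\T_{a,b}$. The separation of $\{a,b,c\}$ from the equator is then obtained by eliminating alternatives: a $1$-sided systolic loop transplants to a Bavard loop on $X$ (Proposition~\ref{36}, via Lemmas~\ref{path} and~\ref{loop}), a loop meeting the equator yields a Bavard loop (Lemma~\ref{meet}), and differing partitions of $\Bra(\Sigma_2)$ among the three companion bottles yield a Bavard loop by cut-and-paste; simplicity of $\Delta$ is proved by a separate graph (``fat graph'') argument using that its lift is systolic. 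The coarea inequality for the distance from the equator enters only afterwards, in Section~\ref{fourteen}, on top of Proposition~\ref{141}. If you want to keep a coarea-based route here, you would at least have to show that under the no-Bavard hypothesis some sublevel ``island'' containing all three Weierstrass points has short boundary, and separately establish (5); as it stands, both steps fail, and the companion-Klein-bottle mechanism is what actually carries the proposition.
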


The proposition will be proved in this section.  We start with some
preliminary observations.  Pulling back the metric to a double cover
results in a metric of twice the area.  We therefore obtain the
following lemma.

\begin{lemma}
Given a~$J$-invariant metric on the Klein surface~$3\RP^2$, we have
the following relations among the areas of the surfaces appearing in
diagram~\eqref{53}:
\[
\area(3\RP^2) = \area (S^2) = \area (K),
\]
as well as the relation~$\area(\Sigma_2) = \area (\T_{a,b}) = 2 \area
(3\RP^2)$.
\end{lemma}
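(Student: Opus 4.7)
The strategy is to track the metric through the five surfaces of diagram~\eqref{53} using the single principle that a degree-$d$ covering multiplies area by $d$ (in both the unramified and the ramified case, since branch loci are finite and thus of measure zero in dimension~$2$). Starting from the given $J$-invariant metric on $3\RP^2$, I would first pull it back along the unramified double cover $\Sigma_2 \to 3\RP^2$ (the covering induced by $\tau$, which is fixed point-free), producing a $\tau$-invariant metric on $\Sigma_2$ with $\area(\Sigma_2) = 2\area(3\RP^2)$.

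The next step is to descend this lifted metric to $S^2$ along the hyperelliptic projection $\Sigma_2 \to S^2 = \Sigma_2/J$. For this, I need the lift to be invariant not only under $\tau$ but also under the hyperelliptic involution $J\colon \Sigma_2 \to \Sigma_2$. This is where Proposition~\ref{comm} enters: since $J$ and $\tau$ commute, the involution $J$ on $\Sigma_2$ descends to the deck transformation $J$ on $3\RP^2 = \Sigma_2/\tau$ given in \eqref{deck}, so the pulled-back metric is $J$-invariant precisely because we started with a $J$-invariant metric downstairs. The branched double cover $\Sigma_2 \to S^2$, ramified over the six Weierstrass points, then yields $\area(S^2) = \tfrac12 \area(\Sigma_2) = \area(3\RP^2)$.

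Finally, I would pull this metric on $S^2$ back along the branched double cover $\T_{a,b} \to S^2$ (ramified over the four points $a,\bar a, b, \bar b$), obtaining $\area(\T_{a,b}) = 2\area(S^2) = 2\area(3\RP^2)$. The construction of $\T_{a,b}$ in the proof of Proposition~\ref{com} makes the cover $\T_{a,b} \to S^2$ equivariant with respect to complex conjugation on each side; therefore the pulled-back metric is automatically $\tau$-invariant and descends along the unramified double cover $\T_{a,b} \to K$, giving $\area(K) = \tfrac12 \area(\T_{a,b}) = \area(3\RP^2)$. Chaining these four relations produces all the claimed equalities.

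There is no real obstacle here: the proof is a bookkeeping exercise, and the only nontrivial verification is ensuring that at each quotient step the relevant involution preserves the metric. The first such verification rests on Proposition~\ref{comm}; the second on the explicit equivariance of the construction \eqref{52}.
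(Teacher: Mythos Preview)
Your argument is correct and follows exactly the route the paper intends: the paper's proof is the single observation preceding the lemma that pulling back along a double cover doubles the area, and you have simply unpacked this by checking at each step the invariance needed for the metric to descend (using Proposition~\ref{comm} for the $J$-step and the explicit $\tau$-equivariance of~\eqref{52} for the Klein bottle). Nothing further is required.
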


Next, we show that averaging the metric improves the systolic ratio.

\begin{proposition}
\label{32}
Let~$\Sigma$ be a hyperelliptic orientable double cover of a Klein
surface~$X$.  Consider a Riemannian metric on~$\Sigma$.  Then
\begin{enumerate}
\item
averaging its metric by the hyperelliptic involution~$J$ increases the
systolic ratio~$\frac{\sys^2}{\area}$ of~$\Sigma$;
\item
averaging its metric by the complex conjugation~$\tau$ increases the
systolic ratio~$\frac{\sys^2}{\area}$ of~$\Sigma$;
\item
the ratio~$\frac{\disp(\tau)^2}{\area}$ defined in \eqref{61b}
increases under averaging.
\end{enumerate}
\end{proposition}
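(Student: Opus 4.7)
The plan is to recognise both averaging operations as conformal rescalings within the fixed hyperelliptic conformal class, and then to bound lengths via the pointwise concavity of~$\sqrt{\cdot}$. Since $J$ is holomorphic and $\tau$ antiholomorphic, each is a conformal automorphism of the underlying Riemann surface; so for $\sigma \in \{J,\tau\}$ and a metric $g$ in the hyperelliptic conformal class of $\Sigma$, the pullback $\sigma^* g$ lies in the same conformal class. On a surface this means $\sigma^* g = \phi_\sigma\, g$ for some positive function $\phi_\sigma$, and the averaged metric $\bar g := \tfrac{1}{2}(g + \sigma^* g) = \tfrac{1 + \phi_\sigma}{2}\, g$ is thus conformal to $g$ and $\sigma$-invariant.

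A direct calculation yields $\area(\bar g) = \tfrac{1}{2}(\area(g) + \area(\sigma^* g)) = \area(g)$, since $\sigma$ is a diffeomorphism. The entire content of the proposition is therefore contained in showing that the length-type invariants are monotone nondecreasing under averaging. The key pointwise bound, coming from the concavity of~$\sqrt{\cdot}$, is
\[
\sqrt{\bar g(v,v)} \;\ge\; \tfrac{1}{2} \bigl(\sqrt{g(v,v)} + \sqrt{g(\sigma_* v, \sigma_* v)}\bigr)
\]
for every tangent vector~$v$, and integrating along an arbitrary curve~$\eta$ gives
\[
L_{\bar g}(\eta) \;\ge\; \tfrac{1}{2}\bigl(L_g(\eta) + L_g(\sigma \circ \eta)\bigr).
\]

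For parts (1) and (2) I apply this to a $\bar g$-systolic loop~$\eta$: both $\eta$ and~$\sigma \circ \eta$ are noncontractible (since $\sigma$ is a diffeomorphism), so each length on the right is at least~$\sys(g)$, whence $\sys(\bar g) \ge \sys(g)$. For part~(3) I apply it to a path~$c$ from~$p$ to~$\tau(p)$. When $\sigma = \tau$, the image $\tau\circ c$ joins~$\tau(p)$ back to~$p$, so both terms bound $\disp_g(\tau)$ from below; when $\sigma = J$, the commutation relation $\tau \circ J = J \circ \tau$ of Proposition~\ref{comm} forces $J \circ c$ to join $J(p)$ to $\tau(J(p))$, a pair whose $g$-distance is again at least $\disp_g(\tau)$. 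In every case $d_{\bar g}(p,\tau(p)) \ge \disp_g(\tau)$ for all~$p$, so $\disp_{\bar g}(\tau) \ge \disp_g(\tau)$. Combined with the preservation of area, each of $\sys^2/\area$ and $\disp(\tau)^2/\area$ is nondecreasing, which is the claim. The only subtlety is the use of the commutation relation in part~(3) when averaging by~$J$; the rest of the argument is uniform and essentially formal.
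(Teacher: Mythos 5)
Your proof is correct and follows essentially the same route as the paper's: both recognize the averaged metric as an averaging of the conformal factor (since $J$ is holomorphic and $\tau$ antiholomorphic), which preserves the area, while the length of any curve in the averaged metric dominates the average of the $g$-lengths of the curve and of its image under $J$ (resp.\ $\tau$), so neither the systole nor the displacement can decrease. The paper states this length comparison via the energy of constant-speed loops and defers the details to \cite{BCIK1}, merely alluding to the remaining cases, whereas you derive it from the pointwise concavity of the square root and make explicit the use of the commutation relation $\tau\circ J=J\circ\tau$ in part~(3); these are minor variants of one and the same argument.
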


\begin{proof}
This point was discussed in detail in \cite{BCIK1}.  We summarize the
argument as follows.  Express the metric in terms of a constant
curvature metric in its conformal class, by means of a conformal
factor~$f^2$.  Thus, in the case of a torus we obtain a metric~$f^2(p)
(dx^2+dy^2)$ at a typical point~$p$ of the torus, where the
function~$f$ is doubly periodic.  We average the factor~$f^2$ by the
hyperelliptic involution~$J:\Sigma \to \Sigma$, i.e., we replace
$f^2(p)$ by
\[
\frac{1}{2}\left( f^2(p)+f^2(J(p)) \right).
\]
Such averaging preserves the total area of the metric.  Similarly, it
preserves the energy of a curve on the surface.  Choosing a constant
speed parametrisation of a systolic loop for the original metric, we
see that its energy is preserved under averaging.  Hence its length is
not decreased by averaging.  Similar remarks apply in the two
remaining cases.
\end{proof}

Given a Klein surface~$X=3\RP^2$, we can similarly average the metric
by the hyperelliptic involution~\eqref{deck}.  Hence we may assume
without loss of generality that the metrics on both~$3\RP^2$ and
$\Sigma_2$ are~$J$-invariant.

We now consider a companion Klein bottle~$K$ as in
Proposition~\ref{com}.  We will seek to transplant short loops
from~$K$ to~$X$.  A systolic loop on~$K$ is either a 1-sided
loop~$\gamma$, or a~$2$-sided loop~$\delta$ (see Lemma~\ref{ed}).

Now consider the real model \eqref{51} of~$\Sigma_2$, and its three
companion tori of type~\eqref{52}.  For each companion torus, we pass
to the quotient Klein bottle, and find a systolic loop satisfying
Bavard's bound.  We thus obtain three loops~$\delta_{a,b}$,
$\delta_{b,c}$, and~$\delta_{a,c}$.  If all three are two-sided, they
lift to loops~$\tilde\delta_{a,b}\subset\T_{a,b}$,
$\tilde\delta_{b,c}\subset\T_{b,c}$,
and~$\tilde\delta_{c,a}\subset\T_{c,a}$ on the tori.
Let~$\Delta_{a,b}\subset \hat\C$ be the projection of the loop
$\tilde\delta_{a,b}$ to the sphere, and similarly for~$\Delta_{b,c}$
and~$\Delta_{c,a}$.  Each loop~$\Delta\subset \hat\C$ defines a
partition of the 6-point set~$\Bra(\Sigma_2)\subset \hat\C$.

\begin{proposition}
Assuming the loops~$\delta_{a,b}$, etc., are~$2$-sided, build the
corresponding loops~$\Delta_{a,b}\subset \hat\C$, etc.  Consider the
three partitions of~$\Bra(\Sigma_2)\subset \hat\C$ defined by the
three loops~$\Delta\subset S^2$.  If the three partitions are not
identical, then there is a Bavard loop on~$\Sigma_2$.
\end{proposition}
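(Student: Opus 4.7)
The plan is to show that if the three partitions are not all identical then one can construct a noncontractible loop on $\Sigma_2$ of length at most Bavard's bound $C_{\rm Bavard}\sqrt{\area(\Sigma_2)}$; without loss of generality the partitions induced by $\Delta_{a,b}$ and $\Delta_{b,c}$ on $\Bra(\Sigma_2)$ differ.

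The first ingredient is a lifting criterion for the hyperelliptic cover $\Sigma_2\to\hat\C$: writing $s$ for the number of the six branch points enclosed by a simple closed loop $\Delta\subset\hat\C$ disjoint from the branch locus, a Riemann--Hurwitz computation $\chi=2-s$ on the preimage of the bounded disk shows that the lift is a noncontractible closed curve on $\Sigma_2$ precisely when $s\in\{2,3,4\}$; moreover the lift has total length $|\Delta|$ in the even cases $s\in\{2,4\}$ (a pair of disjoint loops) and length $2|\Delta|$ in the odd case $s=3$ (a single loop covering $\Delta$ twice). Since each $\Delta_{i,j}$ separates $\{i,\bar i\}$ from $\{j,\bar j\}$ and adjoins $0$, $1$, or $2$ of $\{k,\bar k\}$, it always encloses between $2$ and $4$ branch points, so its lift is automatically noncontractible. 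If for some $\Delta_{i,j}$ the third pair lies entirely on one side ($s\in\{2,4\}$), the lift has length at most $|\Delta_{i,j}|\leq|\tilde\delta_{i,j}|=|\delta_{i,j}|\leq C_{\rm Bavard}\sqrt{\area(X)}$, well within the desired bound, and we are done.

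The delicate case is when every $\Delta_{i,j}$ encloses exactly $3$ branch points. Here I put $\Delta_{a,b}$ and $\Delta_{b,c}$ in transverse position (first assuming two intersection points for concreteness) and write $n_{ij}$ ($i,j\in\{0,1\}$) for the branch-point count in the quadrant with $\Delta_{a,b}$-side $i$ and $\Delta_{b,c}$-side $j$. The constraints $n_{11}+n_{10}=3$ and $n_{11}+n_{01}=3$ force $n_{10}=n_{01}=3-n_{11}$ and $n_{00}=n_{11}$. A short analysis in $n_{11}\in\{0,1,2,3\}$ identifies the ``partitions differ'' hypothesis with $n_{11}\in\{1,2\}$, and in each subcase exactly two quadrants carry an odd branch-point count: the diagonal pair $\{R_{11},R_{00}\}$ when $n_{11}=1$, and the antidiagonal pair $\{R_{10},R_{01}\}$ when $n_{11}=2$. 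These two odd-count quadrants use complementary arcs of the two loops, so their perimeters sum to exactly $|\Delta_{a,b}|+|\Delta_{b,c}|$; by pigeonhole one of them has perimeter at most $\tfrac{1}{2}(|\Delta_{a,b}|+|\Delta_{b,c}|)\leq C_{\rm Bavard}\sqrt{\area(X)}$, and its boundary lifts to a single noncontractible loop on $\Sigma_2$ of length at most $2 C_{\rm Bavard}\sqrt{\area(X)}=C_{\rm Bavard}\sqrt{\area(\Sigma_2)}$.

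The main obstacle will be extending this parity-and-pigeonhole argument when $\Delta_{a,b}$ and $\Delta_{b,c}$ meet in more than two points: the sphere is cut into more regions, the arc bookkeeping becomes heavier, but the underlying parity structure (together with the complementarity of the arc collections bounding odd-count regions) should persist. A subsidiary step, likely handled by the hyperelliptic/antiholomorphic averaging of Proposition~\ref{32}, is to arrange the loops so that generic transverse intersections are achieved without destroying the branch-point count structure used above.
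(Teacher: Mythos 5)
There is a genuine gap, and it sits exactly at the decisive step. In your ``delicate case'' you select the two regions with an \emph{odd} branch-point count, but in both subcases these are regions containing exactly \emph{one} of the six branch points (if $n_{11}=1$ then also $n_{00}=6-1-2-2=1$; if $n_{11}=2$ then $n_{10}=n_{01}=1$). By your own lifting criterion ($s\in\{1,5\}$), the preimage in $\Sigma_2$ of the boundary of such a region is the boundary of a disk (the double cover of a disk branched at one point is a disk), hence \emph{contractible}; so the loop you produce proves nothing. Even setting this aside, passing to the connected double cover doubles the length, yielding only $2C_{{\rm Bavard}}\sqrt{\area(X)}\approx 2.11\sqrt{\area(X)}$; the identity you invoke, $2C_{{\rm Bavard}}\sqrt{\area(X)}=C_{{\rm Bavard}}\sqrt{\area(\Sigma_2)}$, is false since $\area(\Sigma_2)=2\area(X)$, and in any case what is needed downstream is a noncontractible loop whose projection to $3\rp^2$ has length at most $C_{{\rm Bavard}}\sqrt{\area(X)}$ (even Gromov's $\sqrt{4/3}\,\sqrt{\area(X)}$ is smaller than your bound), so the estimate cannot feed into the main theorem.

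The missing idea is precisely the cut-and-paste of \cite{KS1} that the paper cites: at the two intersection points, recombine complementary arcs so as to bound the \emph{even}-count regions. Writing $u,u'$ for the two arcs of $\Delta_{a,b}$ and $v,v'$ for those of $\Delta_{b,c}$, take $\partial R_{10}=u'\cup v$ and $\partial R_{01}=u\cup v'$ when $n_{11}=1$, and $\partial R_{11}=u\cup v$, $\partial R_{00}=u'\cup v'$ when $n_{11}=2$. Each of these encloses exactly two branch points, hence lifts to $\Sigma_2$ as a closed noncontractible curve of the \emph{same} length; since the chosen pair uses each of the four arcs exactly once, the pigeonhole gives one of them of length at most $\tfrac12\bigl(\length(\Delta_{a,b})+\length(\Delta_{b,c})\bigr)\le C_{{\rm Bavard}}\sqrt{\area(X)}$, i.e.\ a genuine Bavard loop, with no factor of~$2$. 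Two further points: you yourself flag that the case of more than two intersection points is not handled, and the simplicity of the loops $\Delta$ (which all your region counts and noncontractibility claims rely on) is not available at this stage of the paper; also, your assertion that $\Delta_{i,j}$ separates $\{i,\bar i\}$ from $\{j,\bar j\}$ is generally false --- in the paper's main configuration $\Delta_{a,b}$ separates $\{a,b\}$ from $\{\bar a,\bar b\}$ --- though only the weaker fact that it splits the four torus branch points two-and-two is actually used.
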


\begin{proof}
Consider a Bavard systolic loop~$\delta \subset K$ of a Klein
bottle~$K$.  Consider its lift~$\tilde\delta\subset\T$ to the torus,
and the projection~$\Delta\subset \hat\C$.  If two such loops are
non-homotopic in
\[
S^2\setminus \{\Bra(\Sigma_2)\},
\]
we apply the cut and paste technique of \cite{KS1} to rearrange
segments of the two loops into a pair of loops that lift to closed
paths on the genus~$2$ surface.  One of the lifts is necessarily
Bavard.
\end{proof}

\begin{proposition}
\label{36}
If a systolic loop on a companion Klein bottle is~$1$-sided, then we
can transplant it to the Klein surface~$3\RP^2$, which therefore
satisfies Bavard's inequality.
\end{proposition}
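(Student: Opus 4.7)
The plan is to transplant the 1-sided systolic loop $\gamma\subset K$ to the orientable double cover $\Sigma_2$ through the common hyperelliptic base $\hat\C$. After averaging the metric by $J$ and $\tau$ (justified by Proposition~\ref{32}), Bavard's inequality applied to $K$ together with the area relations gives $\length(\gamma)\leq C_{\rm Bavard}\sqrt{\area(3\RP^2)}$. By Lemma~\ref{ed}(1), $\gamma$ lifts to a minimizing geodesic $\tilde\gamma\subset\T_{a,b}$ from $p$ to $\tau(p)$ of the same length; by uniqueness of minimizers in the generic case, $\tilde\gamma$ may be taken $\tau$-symmetric as a set, since $\tau(\tilde\gamma)^{-1}$ is another minimizing path with the same endpoints.

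Next I would project $\tilde\gamma$ to a path $\bar\gamma\subset\hat\C$ from $z=\pi(p)$ to $\bar z$ via $\T_{a,b}\to\hat\C$ (a local isometry off branch points), and lift $\bar\gamma$ through $\Sigma_2\to\hat\C$ starting at a preimage $q$ of $z$. The resulting path $\mu\subset\Sigma_2$ has length $\length(\gamma)$ and, by Proposition~\ref{comm}, terminates at one of the two preimages of $\bar z$, namely $\tau(q)$ or $J\tau(q)$. If $\mu$ ends at $\tau(q)$, then its projection to $3\RP^2=\Sigma_2/\tau$ is a 1-sided, hence noncontractible, loop by Lemma~\ref{ed}(1), of length at most $C_{\rm Bavard}\sqrt{\area(3\RP^2)}$, yielding Bavard's inequality for $3\RP^2$.

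The remaining and main point is to show that $\mu$ always terminates at $\tau(q)$. The monodromy discrepancy between the branched double covers $\T_{a,b}\to\hat\C$ and $\Sigma_2\to\hat\C$ is concentrated at the two extra Weierstrass points $c,\bar c\in\Bra(\Sigma_2)\setminus\Bra(\T_{a,b})$. Choosing a $\tau$-symmetric reference path $\rho$ from $z$ to $\bar z$ (such as a geodesic arc meeting the equator perpendicularly), the loop $\bar\gamma\cdot\rho^{-1}$ is $\tau$-invariant as a set. Since $\tau$ is orientation-reversing on $\hat\C$ and swaps $c$ with $\bar c$, the winding numbers of $\bar\gamma\cdot\rho^{-1}$ around $c$ and around $\bar c$ must coincide; their sum is therefore even, the monodromy discrepancy vanishes, and $\mu$ indeed ends at $\tau(q)$. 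This symmetry argument is the main technical step, and the conclusion is then immediate from the diagram~\eqref{53}.
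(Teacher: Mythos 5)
Your construction breaks down at the step you yourself flag as the main technical point. First, the symmetry premise is untenable: an arc $\tilde\gamma\subset\T_{a,b}$ joining $p$ to $\tau(p)$ can never be $\tau$-invariant as a set, because an involution mapping an arc to itself while swapping its endpoints must fix an interior point, whereas $\tau$ is fixed point-free; correspondingly, ``uniqueness of minimizers in the generic case'' is not available (the statement must hold for every metric), and in this situation the minimizer from $p$ to $\tau(p)$ is in fact never unique. Without that invariance, the claim that $\bar\gamma\cdot\rho^{-1}$ has equal winding numbers about $c$ and $\bar c$ has no justification, and it is genuinely false in general: the systolic loop of the companion Klein bottle is constrained only through the branch points $a,\bar a,b,\bar b$, so its projection can wind an odd total number of times about $\{c,\bar c\}$. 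In that case your lift $\mu\subset\Sigma_2$ ends at $J\tau(q)$ rather than $\tau(q)$, and its image in $3\RP^2$ does not close up, so the transplanted curve is not a loop at all. This is exactly the phenomenon acknowledged in Lemma~\ref{loop}: the lift to $\Sigma_2$ ``may or may not close up, depending on the position of the third pair $(c,\bar c)$.''

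The paper avoids controlling this monodromy altogether by choosing the base point on the equator. Since the full preimage circle $\tilde\gamma\cup\tau(\tilde\gamma)$ is $\tau$-invariant, its projection $\Gamma\subset\hat\C$ is conjugation-invariant and hence meets $\hat\R$ at some $p_0$; by Lemma~\ref{lem:circle} (equivalently Proposition~\ref{comm}), over the equator the involutions $\tau$ and $J$ agree on the fiber, so the two preimages of $p_0$ in $\Sigma_2$ form a single $\tau$-orbit. Basing the loop $Q(\gamma_+)$ at $p_0$ (Lemma~\ref{path}), its lift to $\Sigma_2$ either closes up or ends at the $\tau$-translate of its starting point, and in \emph{either} case the image in $3\RP^2=\Sigma_2/\tau$ is a closed noncontractible curve of the required length (Lemma~\ref{loop}). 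If you want to salvage your argument, replace the arbitrary base point $z$ by a crossing point of $\Gamma$ with the equator; the parity of the winding about $c,\bar c$ then becomes irrelevant, which is precisely the paper's route.
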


\begin{proof}
Given a genus~$2$ surface \eqref{51}, consider a companion Klein
bottle~$K_{a,b} = \T_{a,b}/\tau$.  Consider a systolic loop
on~$K_{a,b}$.  If a systolic loop~$\gamma$ is one-sided, then~$\gamma$
lifts to a path connecting a pair of points in an orbit of~$\tau$ on
the torus.  The proof is completed by combining Lemma~\ref{path} and
Lemma~\ref{loop} below.
\end{proof}

\begin{lemma}
\label{path}
Let~$\gamma\subset K$ be a 1-sided loop, and let~$\tilde
\gamma\subset\T$ be the circle which is the connected double cover
of~$\gamma$.  Then there is a pair of real points~$p, \tau(p) \subset
\tilde\gamma$ which decompose~$\tilde\gamma$ into a pair of paths:
\[
\tilde\gamma= \gamma_+ \cup \gamma_- ,
\]
such that each of the paths~$\gamma_+$,~$\gamma_-$ projects to a
closed curve~$\Gamma_+$,~$\Gamma_-$ on~$\hat\C$.
\end{lemma}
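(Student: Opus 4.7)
The plan is to produce the two required real points as an antipodal $\tau$-orbit on $\tilde\gamma$, exploiting the commutation $\tau\circ J=J\circ\tau$ from Proposition~\ref{comm}. Let $\pi\colon\T_{a,b}\to\hat\C$ denote the hyperelliptic projection (quotient by $J$). Since $\pi$ intertwines $\tau$ with complex conjugation on $\hat\C$ and $\tilde\gamma$ is $\tau$-invariant (being the full preimage of $\gamma$ under $\T_{a,b}\to K$), the projected image $\pi(\tilde\gamma)\subset\hat\C$ is a connected, $\tau$-invariant subset. Complex conjugation permutes the two open hemispheres of $\hat\C\setminus\hat\R$, so such a subset cannot avoid the equator; hence there exists $p\in\tilde\gamma$ with $\pi(p)\in\hat\R$.

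Next I would verify that any such $p$ is real, i.e., fixed by $\tau\circ J$. By the real model~\eqref{52}, the branch locus $\{a,\bar a,b,\bar b\}$ of $\pi$ lies off $\hat\R$, so the fiber $\pi^{-1}(\pi(p))=\{p,Jp\}$ has exactly two elements. Both $\tau(p)$ and $J(p)$ lie in this fiber; the freeness of $\tau$ rules out $\tau(p)=p$, leaving $\tau(p)=J(p)$, equivalently $\tau\circ J(p)=p$. Thus $p$ is a real point in the sense of Lemma~\ref{lem:circle}.

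Finally, $\tau$ acts freely on the circle $\tilde\gamma$, and a free involution of a circle is necessarily the antipodal rotation, so $p$ and $\tau(p)$ are distinct diametrically opposite points. Removing them splits $\tilde\gamma$ into two arcs $\gamma_+$ and $\gamma_-$, each with endpoints $p$ and $\tau(p)$. Because $\pi(p)\in\hat\R$, we have $\pi(\tau(p))=\tau(\pi(p))=\pi(p)$, so the images $\Gamma_\pm:=\pi(\gamma_\pm)$ are closed curves in $\hat\C$, as required. There is no genuine obstacle here; the one subtle point is the fiber-size assertion in the second step, which comes down to the bookkeeping observation that the branch points of the companion torus~\eqref{52} are non-real conjugate pairs and hence disjoint from $\hat\R$.
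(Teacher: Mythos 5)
Your argument is correct and follows essentially the same route as the paper: project $\tilde\gamma$ to $\hat\C$, use connectedness plus conjugation-invariance to force an intersection with the equator $\hat\R$, take the fiber over that point as the pair $\{p,\tau(p)\}$ of real points, and split $\tilde\gamma$ there so that both arcs project to closed curves. The only difference is cosmetic: you spell out why the fiber over the equatorial point is $\{p,\tau(p)\}$ (i.e.\ $\tau(p)=J(p)$, using freeness of $\tau$ and the non-real branch points), a step the paper asserts without detail, and you take the two arcs of $\tilde\gamma\setminus\{p,\tau(p)\}$ directly rather than lifting $\gamma$ from $p$, which yields the same decomposition.
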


\begin{proof}
Note that~$\tilde \gamma \subset\T$ is invariant under the fixed
point-free action of~$\tau$ on the torus.  The loop~$\tilde \gamma$
projects to a loop denoted
\[
\Gamma \subset \hat\C
\]
under the hyperelliptic quotient~$Q: \T\to \hat\C$.
Let~$\hat\R\subset\hat\C$ be the fixed point set of~$\tau$ acting
on~$\hat\C$.  The connected loop~$\Gamma\subset\hat\C$ is invariant
under complex conjugation.  Therefore it must meet the equator
$\hat\R$ in a point~$p_0\in\hat\R$ (see Section~\ref{six}).  Let
\[
Q^{-1}(p_0)= \{ p, \tau(p) \} \subset \T.
\]
Note that~$p_0$ is a self-intersection point of~$\Gamma$.  It may be
helpful to think of~$\Gamma$ as a figure-eight loop, with its
center-point~$p_0$ on the equator~$\hat\R$.  

We now view the original loop~$\gamma\subset K$ as a path starting at
the image of the point~$p$ in~$K$.  We lift the path~$\gamma$ to a
path~$\gamma_+\subset \T$ joining~$p$ and~$\tau(p)$.  The
path~$\gamma_+$ projects to half the loop~$\Gamma\subset \hat\C$,
forming one of the hoops of the figure-eight.  If we let~$\gamma_-=
\tau(\gamma_+)$, we can write~$\tilde\gamma= \gamma_+ \cup \gamma_-$.
Thus the loop~$\Gamma \subset \hat\C$ is the union of two loops
$\Gamma = Q(\gamma_+) \cup Q(\gamma_-)$.
\end{proof}

The following lemma refers to the five surfaces appearing in
diagram~\eqref{53}.

\begin{lemma}
\label{loop}
Let~$\gamma_+\subset \T_{a,b}$ be the path constructed in
Lemma~\ref{path}.  Consider the loop~$Q(\gamma_+)\subset \hat\C$, and
lift it to a path~$\tilde{\tilde\gamma} \subset \Sigma_2$.  Then the
image of~$\tilde{\tilde\gamma}$ under the covering
projection~$\Sigma_2 \to3\RP^2$ is a closed curve.
\end{lemma}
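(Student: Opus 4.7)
The plan is to trace where the endpoints of the lift $\tilde{\tilde\gamma}$ lie in $\Sigma_2$, and then to verify that they either coincide or form a single $\tau$-orbit, so that they are identified by the quotient map $\Sigma_2\to 3\RP^2$.

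First I would identify the basepoint of the loop $Q(\gamma_+)\subset\hat\C$. By Lemma~\ref{path}, the path $\gamma_+\subset\T_{a,b}$ runs from $p$ to $\tau(p)$ with $Q(p)=Q(\tau(p))=p_0\in\hat\R$, where $p_0$ is the self-intersection point of the figure-eight $\Gamma$ on the equator. Hence $Q(\gamma_+)$ is a loop in $\hat\C$ based at $p_0$.

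Next I would analyze the fiber of the hyperelliptic cover $Q_2\colon\Sigma_2\to\hat\C$ over $p_0$. Since $p_0$ lies on the equator $\hat\R$, while the Weierstrass values $a,\bar a,b,\bar b,c,\bar c$ of $\Sigma_2$ lie off $\hat\R$ (as dictated by the fixed-point-freedom of $\tau$), the point $p_0$ is not a branch value of $Q_2$. Its fiber therefore consists of two distinct points $\{q,J(q)\}$ exchanged by the hyperelliptic involution $J$ of $\Sigma_2$. The antiholomorphic involution $\tau(x,y)=(\bar x,\bar y)$ intertwines $Q_2$ with complex conjugation on $\hat\C$, and $\tau(p_0)=p_0$, so $\tau$ permutes this two-element fiber. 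Because $\tau$ acts without fixed points on $\Sigma_2$, it must swap the two preimages, giving $\tau(q)=J(q)$. Thus $Q_2^{-1}(p_0)$ is simultaneously a $J$-orbit and a $\tau$-orbit, consistent with Proposition~\ref{comm}.

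Finally, I would lift $Q(\gamma_+)$ to $\Sigma_2$ starting at $q$. The resulting path $\tilde{\tilde\gamma}$ must terminate at some point of $Q_2^{-1}(p_0)$, \ie at $q$ or at $\tau(q)$. In either case, the initial and terminal points of $\tilde{\tilde\gamma}$ project to the same point of $3\RP^2=\Sigma_2/\tau$, so $\tilde{\tilde\gamma}$ descends to a closed curve. The only substantive step is the identification of the fiber $Q_2^{-1}(p_0)$ as a common $J$- and $\tau$-orbit; the rest is immediate from the definitions.
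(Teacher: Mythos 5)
Your argument is correct and follows essentially the same route as the paper: the endpoints of the lift $\tilde{\tilde\gamma}$ lie in the fiber $Q_2^{-1}(p_0)$, and since this fiber is a single $\tau$-orbit, the endpoints are identified in $3\RP^2=\Sigma_2/\tau$ whether or not the lift closes up. You merely make explicit a point the paper asserts without comment, namely that fixed-point-freedom of $\tau$ (together with $p_0\in\hat\R$ not being a branch value, as all branch points are non-real) forces $\tau$ to swap the two points of the fiber.
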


\begin{proof}
To transplant~$\gamma_+$ to the Klein surface~$3\RP^2$, note that the
path~$\tilde{\tilde\gamma} \subset \Sigma_2$ may or may not close up,
depending on the position of the third pair~$(c,\bar c)$ of branch
points of~$\Sigma_2\to \hat\C$.  If~$\tilde{\tilde\gamma}$ is already a
loop, then it projects to a Bavard loop on the Klein
surface~$3\RP^2=\Sigma/\tau$.  In the remaining case, the
path~$\tilde{\tilde\gamma}$ connects a pair of opposite points~$\tilde
p, \tau(\tilde p)$ on the surface~$\Sigma_2$, where~$Q(\tilde p)=
Q(\tau(\tilde p))= p_0$.  Therefore~$\tilde{\tilde\gamma}$ projects to
a Bavard loop in this case, as well.
\end{proof}

By Proposition~\ref{36}, it remains to consider the case when each
systolic loop~$\delta$ of each of the three companion Klein
bottles~$K_{a,b}, K_{a,c}, K_{b,c}$ is~$2$-sided.  Thus each of these
Bavard loops~$\delta_{a,b},\delta_{a,c},\delta_{b,c}$ lifts to a
closed curve~$\tilde\delta$ on the corresponding torus.  Let~$\Delta =
Q(\tilde\delta) \subset \hat\C=S^2$ be the corresponding loop on the
sphere.

\begin{lemma}
\label{meet}
If~$\Delta$ meets the equator, then the original Klein
surface~$3\RP^2$ contains a Bavard loop.
\end{lemma}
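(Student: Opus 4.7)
The plan is to adapt the path-splitting construction of Lemmas~\ref{path} and~\ref{loop} to the $2$-sided case. I would first choose a point $p_0\in \Delta\cap\hat\R$ and one of its two hyperelliptic preimages $\tilde p_0\in \Sigma_2$; the other preimage is $J(\tilde p_0)$, and since $\tilde p_0$ lies on the equator of $\Sigma_2$ (the fixed set of $\tau\circ J$) we have $\tau(\tilde p_0)=J(\tilde p_0)$. I would then lift $\Delta$ through the hyperelliptic projection $\Sigma_2\to\hat\C$ starting at $\tilde p_0$, producing an arc $\widetilde\Delta\subset\Sigma_2$. After one full traversal of $\Delta$, the endpoint of $\widetilde\Delta$ is either $\tilde p_0$ (Case~A, a closed loop on $\Sigma_2$) or $J(\tilde p_0)=\tau(\tilde p_0)$ (Case~B, an arc whose endpoints are $\tau$-identified).

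In Case~A the $\tau$-translate $\tau(\widetilde\Delta)=J(\widetilde\Delta)$ is the second, disjoint lift of $\Delta$ (distinct since $\tilde p_0\ne J(\tilde p_0)$), so $\widetilde\Delta$ projects injectively to a $2$-sided loop on $3\RP^2=\Sigma_2/\tau$ of length $\ell(\widetilde\Delta)=\ell(\Delta)$. In Case~B the projection of $\widetilde\Delta$ to $3\RP^2$ is already a closed $1$-sided loop (by Lemma~\ref{ed}), still of length $\ell(\Delta)$.

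For the length bound I would use that the hyperelliptic projections $\T_{a,b}\to\hat\C$ and $\Sigma_2\to\hat\C$ are local isometries for the $J$-quotient metric on $\hat\C$, and that lifting a $2$-sided loop from $K_{a,b}$ to $\T_{a,b}$ preserves length; combined with the area identity $\area(K_{a,b})=\area(3\RP^2)$ from the lemma following Proposition~\ref{com}, this yields
\[
\ell(\Delta)\leq \ell(\tilde\delta)=\ell(\delta)\leq C_{{\rm Bavard}}\sqrt{\area(K_{a,b})} = C_{{\rm Bavard}}\sqrt{\area(3\RP^2)}.
\]

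The main obstacle is verifying that the resulting loop on $3\RP^2$ is noncontractible. For Case~B this is automatic, since a $1$-sided loop cannot be contractible. For Case~A it suffices to show that $\widetilde\Delta$ is noncontractible on $\Sigma_2$, which reduces to a branch-point count: since $\tilde\delta$ is noncontractible on $\T_{a,b}$, the loop $\Delta$ must enclose exactly $2$ of the four branch points $\{a,\bar a,b,\bar b\}$; in Case~A it additionally encloses $0$ or $2$ of $\{c,\bar c\}$, so it separates $2$ or $4$ of the six Weierstrass points of $\Sigma_2\to\hat\C$ from the other four. Both complementary disks in $\hat\C$ then contain branch points, so neither of their lifts to $\Sigma_2$ is a disk, which forces $\widetilde\Delta$ to be noncontractible and hence produces a Bavard loop on $3\RP^2$.
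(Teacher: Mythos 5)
Your construction is precisely the paper's proof: choose $p_0\in\Delta\cap\hat\R$, lift $\Delta$ through the hyperelliptic projection to $\Sigma_2$ starting at a preimage $p$ of $p_0$, and observe that the lift either closes up or joins $p$ to $\tau(p)=J(p)$, so that in either case it projects to a noncontractible loop on $3\RP^2$ of length at most $C_{{\rm Bavard}}\sqrt{\area(3\RP^2)}$. The additional verifications you supply (which the paper leaves implicit) are essentially sound, with two side remarks to adjust: the identity $\tau(\widetilde\Delta)=J(\widetilde\Delta)$ holds only at points lying over the equator, not for the whole lifted curve (harmless, since disjointness and $2$-sidedness are not needed for a Bavard loop), and the assertion that $\Delta$ has odd winding about exactly two of $a,\bar a,b,\bar b$ requires that $\tilde\delta$ be nontrivial in $H_1(\T_{a,b};\Z_2)$ -- which follows because a systolic loop is simple, so its lift is a simple, hence primitive, curve on the torus -- rather than from mere noncontractibility of $\tilde\delta$.
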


\begin{proof}
Let~$p_0 \in \Delta \cap \hat \R$.  Let~$p, \tau(p) \in \Sigma_2$ be
the points above it in~$\Sigma_2$.  We lift the path~$\Delta$ starting
at~$p_0$ to a path~$\delta_+ \subset \Sigma_2$ starting at~$p$.
If~$\delta_+$ closes up, its projection to~$3\RP^2$ is the desired
Bavard loop.  Otherwise, the path~$\delta_+$ connects~$p$
to~$\tau(p)$.  In this case as well, the path~$\delta_+$ projects to a
Bavard loop on~$3\RP^2=\Sigma_2/\tau$.
\end{proof}

It remains to consider the case when~$\Delta\cap\hat\R=\emptyset$.
This corresponds to the annulus decomposition case of
Proposition~\ref{141}, once we show that the loop is simple in the
following lemma.

\begin{lemma}
Let~$\tilde\delta$ be a systolic loop on the torus, and
let~$\Delta=Q(\tilde\delta) \subset \hat\C=S^2$ be the corresponding
loop on the sphere.  Assume that~$\Delta\cap\hat\R=\emptyset$.  Then
the loop~$\Delta$ is simple.
\end{lemma}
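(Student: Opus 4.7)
The plan is to argue by contradiction, exploiting the fact that the systolic loop $\tilde\delta$ is realized by a simple closed geodesic, combined with a cut-and-paste at the intersection points of $\tilde\delta$ with $J(\tilde\delta)$.

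Suppose that $\Delta$ had a self-intersection at some point $p_0 \in \hat\C^+$ (necessarily off the equator $\hat\R$ by hypothesis). Then $Q^{-1}(p_0) = \{p, J(p)\}$ with $p \neq J(p)$, and the self-intersection of $\Delta$ forces $\tilde\delta$ to pass through both $p$ and $J(p)$. Split $\tilde\delta = \alpha \cdot \beta$, where $\alpha : p \to J(p)$ and $\beta : J(p) \to p$ are the two subarcs of $\tilde\delta$. Since we may assume that the metric on $\T_{a,b}$ is $J$-invariant by the averaging argument of Proposition~\ref{32}, the curve $J(\tilde\delta)$ is again a simple closed geodesic of the same length, in which $J(\beta)$ is an arc from $p$ to $J(p)$.

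Next I form the mixed closed curve $\gamma := \alpha \cdot J(\beta)^{-1}$, based at $p$, of length $|\alpha| + |J(\beta)| = |\alpha| + |\beta| = |\tilde\delta|$. Since $J$ acts as $-1$ on $H_1(\T_{a,b})$, the cycle $\beta + J(\beta)$ is $J$-invariant and hence null-homologous, so $[\gamma] = [\tilde\delta]$ in $H_1(\T_{a,b})$ and $\gamma$ is noncontractible. Because $\tilde\delta \subset A = Q^{-1}(\hat\C^+)$ and $\tau(A) \cap A = \emptyset$, the curve $\gamma$ projects injectively to a noncontractible loop $\bar\gamma$ on the companion Klein bottle $K_{a,b} = \T_{a,b}/\tau$ of length $|\tilde\delta|$, in the free homotopy class of the Bavard systolic loop $\delta$.

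The crucial observation is that $\gamma$ has genuine corners at both $p$ and $J(p)$: at each of those points it switches between arcs of the distinct smooth geodesics $\tilde\delta$ and $J(\tilde\delta)$, which must meet transversally, since tangency at an interior point would force the two geodesics to coincide by uniqueness of geodesics with given point and direction. By a standard first-variation argument, each corner may be smoothed out by a local modification to strictly decrease the total length while preserving the free homotopy class. The resulting smooth curve then descends to a noncontractible loop on $K_{a,b}$ of length strictly less than $|\delta|$, contradicting the systolic minimality of $\delta$. Hence $\Delta$ must be simple. The main point requiring care is the transversality of $\tilde\delta$ and $J(\tilde\delta)$ at the intersection points and ensuring the corner-smoothing is sufficiently localized to preserve the homotopy class, both of which are standard.
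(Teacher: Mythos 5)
Your argument is correct in its main case and takes a genuinely different route from the paper's. You work upstairs on the torus: given a self-intersection of $\Delta$, you cut $\tilde\delta$ at the fibre $\{p,J(p)\}$, splice in $J(\beta)$, verify noncontractibility of the spliced loop homologically from $J_*=-\,\mathrm{id}$ on $H_1(\T_{a,b})$ (correct, since $H_1$ is torsion-free and, on a torus, null-homologous means null-homotopic), and then shorten the corner. The paper instead works downstairs: it regards $\Delta$, lying in a hemisphere, as a planar graph, takes the boundary curve $\ell$ of the unbounded complementary region, observes that $\ell$ still has odd winding number about the two enclosed branch points and hence lifts to a noncontractible loop on the torus, and shortens the corners of $\ell$. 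Your version avoids the planar topology, the winding-number bookkeeping and the lifting discussion; it does not even use the hypothesis $\Delta\cap\hat\R=\emptyset$ (which the paper needs in order to place $\Delta$ in a disk), and the final contradiction could be drawn directly against $\sys(\T_{a,b})$, making the descent to the Klein bottle and the remark about injective projection unnecessary.

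The one point you leave open is precisely the alternative your transversality remark raises: what if $\tilde\delta$ and $J(\tilde\delta)$ coincide as sets? This can genuinely occur for a systolic loop on a torus with an elliptic involution: on the flat square torus with $J(z)=-z$, the vertical systolic circle through the fixed points $1/2$ and $(1+i)/2$ is $J$-invariant. In that situation $J$ acts on $\tilde\delta$ as a reflection fixing two Weierstrass points, your curve $\gamma=\alpha\cdot J(\beta)^{-1}$ degenerates to $\tilde\delta$ itself, no corner appears, and $\Delta$ is an arc joining two branch points traversed back and forth, so the conclusion itself fails for such a configuration. Note moreover that if $\tilde\delta$ passes through any Weierstrass point then, since $dJ=-\,\mathrm{id}$ there, $J(\tilde\delta)$ is tangent to $\tilde\delta$ and you are automatically in this coincident case; thus ``transverse or coincident'' really is the full dichotomy, and the coincident branch needs a separate word. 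To be fair, the paper's own proof is also silent on it (its setup tacitly assumes $\Delta$ is an honest loop with odd winding number about two branch points), so the case has to be excluded by the surrounding context in which the lemma is applied --- here $\tilde\delta$ arises as the lift of a two-sided systolic loop of a companion Klein bottle --- rather than by the corner-shortening itself; your write-up should at least record the case and indicate how it is ruled out.
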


\begin{proof}
By hypothesis, the loop~$\Delta$ lies in a hemisphere, i.e., one of
the connected components of~$\hat\C\setminus\hat\R$.  The typical case
of a non-simple loop to keep in mind is a figure-eight curve.  Denote
by~$a,b\in\hat\C$ the branch points with respect to
which~$\Delta$ has odd winding number.  We will think of the
curve~$\Delta$ as defining a connected graph~$\AAA \subset \R^2$ in
a plane.  The vertices of the graph are the self-intersection points
of~$\Delta$.  Each vertex necessarily has valence~$4$.  By adding
the bounded ``faces'' to the graph, we obtain a ``fat'' graph
$\AAA_{{\rm fat}}$ (the typical example is the interior of the
figure-eight).  More precisely, the complement~$\C \setminus \AAA$ has
a unique {\em unbounded\/} connected component, denoted~$E \subset \C
\setminus \AAA$.  Its complement in the plane, denoted
\[
\AAA_{{\rm fat}}= \C \setminus E,
\]
contains both the graph~$\AAA$ and its bounded ``faces''.
Since~$\Delta$ has odd winding number with respect to the branch
points~$a,b$ of the double cover~$Q: \T_{a,b}^2 \to \hat\C$, they must
both lie inside the connected region~$\AAA_{{\rm fat}}$:
\[
a,b\in \AAA_{{\rm fat}}.
\]
The boundary of~$\AAA_{{\rm fat}}$ can be parametrized by a closed
curve~$\ell$, thought of the boundary of the outside
component~$E\subset \C$ so as to define an orientation on~$\ell$ (in
the case of the figure-eight loop, this results in reversing the
orientation on one of the hoops of the figure-eight).  Note that~$\ell
\subset \AAA$ is a subgraph.  Since both branch points lie inside, the
loop~$\ell$ had odd winding number with respect to each of the points
$a$ and~$b$.  Hence~$\ell$ lifts to a noncontractible loop on the
torus.  If~$\Delta$ is not simple, then the boundary of the outside
region~$E$ is not smooth, i.e., the loop~$\ell$ must contain
``corners'' and can therefore be shortened, contradicting the
hypothesis that~$\delta$ is a systolic loop.
\end{proof}

\section{Improving Gromov's~$3/4$ bound}
\label{fourteen}

In this section, we will prove the main theorem for $n=3$.

\begin{theorem}
\label{15b}
Let $\beta=\sqrt{1.333} \simeq 1.1545$.  The bound
\[
\sys \leq \beta \sqrt{\area}
\]
is satisfied by every metric on Dyck's surface~$3\RP^2$.
\end{theorem}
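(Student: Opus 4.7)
The plan is to invoke Proposition~\ref{141} as a dichotomy. If $X=3\RP^2$ contains a Bavard loop, then
\[
\sys(X)^{2}\leq C_{{\rm Bavard}}^{2}\,\area(X)\approx 1.111\,\area(X),
\]
which is strictly stronger than the $1.333$-bound, so that case is settled at once. The only remaining case is the annulus decomposition $X=\Sigma_{1,1}\cup_{\delta}\Mob$ with $|\delta|\leq 2C_{{\rm Bavard}}\sqrt{\area(X)}$, and with $\delta$ double-covering a loop $\Delta\subset\hat\C^{+}$ which lifts to a systolic loop on one of the companion tori produced in Proposition~\ref{com}. After averaging the metric on $\Sigma_2$ by $J$ and by $\tau$ via Proposition~\ref{32} (which only improves the systolic ratio) I may assume the metric is invariant under both involutions.

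First I would isolate a short $1$-sided loop inside $\Mob$. The Möbius band contains the equatorial circle by Proposition~\ref{141}(2), and this circle is the $\tau$-quotient of the equator $E\subset\Sigma_{2}$, on which $\tau$ acts fixed point-freely. Its image in $X$ is therefore $1$-sided, and its length (equal to $\tfrac{1}{2}|E|$) is at least $\sys(X)$, so in particular $|E|\geq 2\sys(X)$. Feeding the lower bound on the length of the core together with the length $|\delta|$ of the boundary into the sharp inequalities of Blatter and Sakai for the Möbius band yields an explicit lower bound
\[
\area(\Mob)\geq F(\sys(X),|\delta|).
\]

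For the once-holed torus $\Sigma_{1,1}$ I would exploit the companion torus structure. In the orientable cover $\Sigma_2$ the surface $\Sigma_{1,1}$ lifts to two disjoint copies, glued to the annulus $A$ covering $\Mob$ along the two preimage circles of $\delta$. The equator $E\subset A$ is invariant under both $\tau$ and $\tau\circ J$ by the lemma following Definition~2.5, and the coarea formula applied to the distance function from $E$ in $\Sigma_2$ converts the length bound $|E|\geq 2\sys(X)$ into a lower estimate on the area of any collar of $E$; pushing the collar across $\delta_1\sqcup\delta_2$ and using that $\delta$ is systolic on the companion torus (hence governed by Loewner's torus inequality applied to $\T_{a,b}$) converts this into a lower bound
\[
\area(\Sigma_{1,1})\geq G(\sys(X),|\delta|).
\]
Adding the two area bounds, eliminating $|\delta|$ via the constraint $|\delta|\leq 2C_{{\rm Bavard}}\sqrt{\area(X)}$, and optimizing is what should produce $\area(X)\geq \sys(X)^{2}/1.333$.

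The main obstacle is the last, numerical step. Because the gain over Gromov's $\tfrac{4}{3}$ sits only in the third decimal, the tradeoff between the Blatter/Sakai term $F$ (which is large when $|\delta|$ is long) and the coarea term $G$ (which is large when $|\delta|$ is short) must be balanced with optimal constants. It is precisely the sharpness of Blatter's Möbius-band inequality, combined with the conformal control on the collar of $E$ coming from the real structure on $\Sigma_2$ and from Loewner applied to the companion torus, that creates the small but positive margin needed to cross the $\tfrac{4}{3}$ threshold.
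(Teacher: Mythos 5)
Your reduction via Proposition~\ref{141} and the disposal of the Bavard-loop case are fine, but the two quantitative steps that are supposed to do the real work both have genuine gaps. First, the Blatter--Sakai inequality does not take the data you feed it. It bounds the area of a M\"obius band from below in terms of its systole \emph{and} the least length of an essential arc with endpoints on the boundary circle (a width), not in terms of the boundary length $|\delta|$; with only ``core $\ge\sys$'' and ``boundary $\le 2C_{\rm Bavard}$'' there is no function $F(\sys,|\delta|)$ coming out of Blatter. The paper has to manufacture precisely this missing width input: Lemma~\ref{lem:dist} keeps the Weierstrass points at distance $\ge\beta/2$ from the equator, and Lemma~\ref{lem:retrac} (a cut-and-paste argument using the $J$-invariant loop $\delta$ and the Bavard bound, yielding the constant $0.6276$) shows that short arcs rel $\partial Y$ are trivial; only then is the completed $h$-neighborhood $\hat U_h$ of the equator a M\"obius band to which Blatter's estimate applies, giving $\area(\hat U_h)\ge 0.324$ (Proposition~\ref{54}). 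Nothing in your outline produces such an arc-length bound, so your $F$ is unsupported.

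Second, the closing step ``add $F+G$ and optimize over $|\delta|$'' cannot work numerically. Your $G$ comes from the coarea inequality for the distance from the equator, so it is proportional to the length of the level curves, hence to $\length(\Delta)$; it collapses when $\Delta$ is short, while the M\"obius term stays near $0.324$, and the sum then falls far below the normalized area $1$. The paper treats exactly this regime by a different mechanism that your proposal omits: the coarea estimate is used only \emph{by contradiction} to prove the sharp upper bound $\length(\Delta)\le\beta^{-1}$ (Lemma~\ref{53b}), which yields $\area(\Mob)\ge 0.324$ and hence $\area(\Sigma_{1,1})\le 0.676$ (Proposition~\ref{base}); one then caps $\Sigma_{1,1}$ with a round hemisphere of boundary length $2\length(\Delta)$ and area at most $2/(\pi\beta^2)$, and applies Loewner's torus inequality~\eqref{11e} to the capped torus, obtaining a noncontractible loop of square-length at most $\tfrac{2}{\sqrt{3}}\left(\tfrac{2}{\pi\beta^2}+0.676\right)\le 1.333$. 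Note that the only upper bound on $\length(\Delta)$ you actually have, namely $C_{\rm Bavard}\approx 1.054$, is numerically insufficient for this capping step, so Lemma~\ref{53b} cannot be bypassed. Without a width bound to activate Blatter and without the capping/Loewner argument (or a genuine substitute for the short-$\Delta$ regime), the proposal does not prove the theorem.
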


The orientable double cover of the hyperbolic Dyck's surface~$3\rp^2$
with the maximal systole was described by Parlier~\cite{Par}.
Silhol~\cite{Si, Si10} identified a presentation of its affine form:
\[
y^2=x^6+ ax^3+1, \quad a=434+108 \sqrt{17}.
\]
See also Leli\`evre \& Silhol \cite{LS} and Gendulphe~\cite{Ge}.  The
maximal systole on a hyperbolic Dyck's surface~$3\rp^2$ is equal to
$\arc\!\cosh \frac{5+\sqrt{17}}{2} = 2.19\ldots$ resulting in a
systolic ratio of $0.76\ldots$. \\

To prove Theorem~\ref{15b}, we use the
partition~$3\RP^2=\Sigma_{1,1}\cup \Mob$, as constructed in
Proposition~\ref{141}.  Since we are studying a scale-invariant
systolic ratio, we may and will normalize~$3\RP^2$ to unit area:
\[
\area(3\RP^2)=1.
\]
By Proposition~\ref{32}, we can assume that the metric on~$3\rp^2$
is~$J$-invariant.

By Proposition~\ref{141}, our desired bound on the systolic ratio of
the Klein surface~$3\RP^2=\Sigma_2/\tau$ reduces to the case when the
Bavard loops~$\delta$ on the companion Klein bottles are 2-sided, but
their lifts~$\tilde\delta$ to the triplet of tori~$\T$ project to
loops~$\Delta\subset\hat\C$, where each of the three loops~$\Delta$
defines the same partition of the set~$\Bra(\Sigma_2)\subset\hat\C$.
By Lemma~\ref{meet}, we may assume each~$\Delta$ lies in the open
northern hemisphere~$\hat\C^+$, and that its lift to~$\Sigma_2$
produces a non-closed curve.  Thus, we may assume that each of the
simple loops~$\Delta \subset \hat\C$ separates the six points~$a,\bar
a, b, \bar b, c, \bar c$ into two triplets~$(a,b,c)$ and~$(\bar a,
\bar b, \bar c)$.  Hence its connected double cover in~$\Sigma_2$ is
isotopic to the equatorial circle~$Q^{-1}(\hat\R)\subset\Sigma_2$.
The latter is a double cover of the equator~$\hat\R\subset\hat\C$ (see
Section~\ref{six}). \\

\forget
We consider two cases:
\begin{enumerate}
\item
$\area(\Sigma_{1,1})\leq 0.324$;
\item
$\area(\Sigma_{1,1})\geq 0.324$.
\end{enumerate}
We will find a short loop on~$\Sigma_{1,1}$ in the case (1), and on
the M\"obius band in the case (2).
\forgotten

We will start with a few preliminary topological results.

\begin{lemma} \label{lem:dist}
We have $\sys(3\RP^2)\leq 2 \, \dist(\Bra(3\RP^2), \text{Eq})$, where
$\text{Eq}$ is the equator of~$\C^+$.

In particular, if $\sys(3\rp^2) \geq \beta$ then 
\begin{equation}
\label{42}
\dist(\Bra(3\RP^2), \text{Eq}) \geq \tfrac{\beta}{2}.
\end{equation}
\end{lemma}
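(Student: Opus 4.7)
The plan is to construct an explicit noncontractible loop on $3\RP^2$ of length exactly $2d$, where $d:=\dist(\Bra(3\RP^2),\text{Eq})$, by exploiting the $J$-symmetry of the metric (which we may assume preserved, by Proposition~\ref{32}).  Let $w\in\Bra(3\RP^2)$ and $e\in\text{Eq}$ realize this minimum distance, and let $\alpha:[0,d]\to 3\RP^2$ be a unit-speed minimizing geodesic from $w$ to $e$; by the first variation formula, $\alpha$ meets the equator perpendicularly at $e$.  Since $J$ is an isometry fixing both $w$ and $e$, the image $J\alpha$ is another minimizing geodesic from $w$ to $e$, and the concatenation $\ell:=\alpha\cdot(J\alpha)^{-1}$ is a loop of length $2d$ based at $w$.

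The main obstacle is showing $\ell$ is noncontractible.  I would prove this by lifting to the orientable double cover $\Sigma_2\to 3\RP^2$ and showing the lift fails to close up.  Fix a preimage $a\in\Sigma_2$ of $w$; then $J(a)=a$ since $a$ is a Weierstrass point of $\Sigma_2$.  Lifting $\alpha$ from $a$ gives a path $\tilde\alpha$ ending at some preimage $\tilde e$ of $e$.  Applying the isometry $J:\Sigma_2\to\Sigma_2$ (well-defined on $3\RP^2$ thanks to $\tau J=J\tau$ of Proposition~\ref{comm}) produces a lift $J\tilde\alpha$ of $J\alpha$ starting at $J(a)=a$ and ending at $J(\tilde e)=\tau(\tilde e)$, where the last equality uses that the equator of $\Sigma_2$ is the pointwise fixed set of $\tau\circ J$.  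Consequently the other lift of $J\alpha$, related to $J\tilde\alpha$ by the deck transformation $\tau$, goes from $\tau(a)$ to $\tilde e$; reversing it yields the lift of $(J\alpha)^{-1}$ starting at $\tilde e$ and ending at $\tau(a)$.  Concatenating with $\tilde\alpha$, the full lift of $\ell$ based at $a$ terminates at $\tau(a)\neq a$.  Therefore $\ell$ does not lift to a closed curve in $\Sigma_2$, so it is orientation-reversing on $3\RP^2$ and in particular not null-homotopic.  Hence $\sys(3\RP^2)\le|\ell|=2d$, and the second assertion of the lemma follows by immediate rearrangement.
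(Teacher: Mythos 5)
Your proof is correct and follows essentially the same route as the paper: your loop $\ell=\alpha\cdot(J\alpha)^{-1}$ is exactly the double cover in $3\RP^2$ of a shortest path in $\hat\C^+$ from a branch point to the equator, which is the one-line construction the paper uses. The only difference is that you verify the noncontractibility (which the paper simply asserts) by lifting to $\Sigma_2$ and using $J(a)=a$ together with $\mathrm{Fix}(\tau\circ J)$ being the equator, and that verification is sound.
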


\begin{proof}
Every path on $\hat\C$ connecting a branch point to the
equator is double covered by a noncontractible loop in $3\RP^2$.  
The lemma follows.
\end{proof}

\forget
We will therefore assume throughout that
\begin{equation}
\label{42}
\dist(\Bra(3\RP^2), \text{Eq}) \geq \tfrac{\beta}{2}.
\end{equation}
We consider two cases:
\begin{enumerate}
\item
$\area(\Sigma_{1,1})\leq 0.324$;
\item
$\area(\Sigma_{1,1})\geq 0.324$.
\end{enumerate}
We will find a short loop on~$\Sigma_{1,1}$ in the case (1), and on
the M\"obius band in the case (2).

\section{Dyck's surfaces}

To prove Theorem~\ref{15b}, we need to locate an essential loop of
length at most~$\beta$, where
\[
1.154556 \leq \beta \leq 1.154557.
\]
If there is no such loop, the distance from each of the three branch
points to the equator must be at least~$\tfrac{1}{2}\beta$.  Note
that~$0.866133 \leq \beta^{-1} \leq 0.866134$.  We define~$h\in\R$ as
follows.

\begin{definition}
Let~$h= \tfrac{1}{2}(\beta-\beta^{-1}) \approx 0.144211$.
\end{definition}
\forgotten

\begin{definition}
Define $Y \subset \Sigma_2$ as the preimage of~$\hat{\C}^+$ under the
ramified cover $\Sigma_2 \to \hat{\C}$.  Observe that $Y$ is a
punctured torus and that $3\rp^2 = Y / \tau$ by identifying the
opposite points on~$\partial Y$ by~$\tau$.
\end{definition}

\begin{lemma} \label{lem:retrac}
If $sys(3\rp^2) \geq \beta$ then every arc of~$Y$ with endpoints
in~$\partial Y$ of length less than $0.6276$ is homotopically trivial
in~$\pi_1(Y,\partial Y)$.
\end{lemma}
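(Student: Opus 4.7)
The plan is to argue by contradiction. Suppose $\alpha \subset Y$ is an arc with endpoints $p, q \in \partial Y$, of length $\ell := \ell(\alpha) < 0.6276$, which is non-trivial in $\pi_1(Y, \partial Y)$. I shall combine a systolic lower bound on the length of the equator of $3\rp^2$ (derived from $\alpha$) with the Blatter--Sakai upper bound coming from the M\"obius band component of the annulus decomposition $3\rp^2 = \Sigma_{1,1} \cup_\delta \Mob$ of Proposition~\ref{141}, so as to produce a contradiction.

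First, I push $\alpha$ through the quotient $\pi \colon Y \to Y/\tau = 3\rp^2$, obtaining an arc $\bar\alpha$ of length $\ell$ joining two points $\bar p, \bar q$ on the equator of $3\rp^2$. Denote by $c_1, c_2$ the two arcs of this equator from $\bar q$ to $\bar p$, so that $|c_1| + |c_2| = L/2$ with $L = |\partial Y|$. The crux is to verify that \emph{both} closing loops $\gamma_i := \bar\alpha \cdot c_i$ are non-contractible in $3\rp^2$. Lifting $\gamma_i$ to $\Sigma_2$ starting at $p$: the lift of $\bar\alpha$ is $\alpha$, ending at $q$; the lift of $c_i$ at $q$ is an arc on $\partial Y$, ending either at $p$ or at $\tau(p)$. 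These two possibilities occur for $c_1$ and $c_2$ respectively, because the equator of $3\rp^2$ (traversed as $c_1 \cdot c_2^{-1}$) lifts from $q$ to $\tau(q)$ rather than back to $q$. When the lift lands at $\tau(p)$, the loop $\gamma_i$ is orientation-reversing in $3\rp^2$ and hence non-contractible. When the lift closes at $p$, the closed lift in $Y$ has the form $\alpha \cdot b^*$ with $b^* \subset \partial Y$; the non-triviality of $\alpha$ in $\pi_1(Y, \partial Y)$ forces $\alpha \cdot b^*$ to be non-trivial in $\pi_1(Y)$, and the injection $\pi_1(Y) \hookrightarrow \pi_1(\Sigma_2)$ provided by the amalgamated structure $\Sigma_2 = Y \cup_{\partial Y} \tau(Y)$ then shows that $\gamma_i$ is non-contractible in $3\rp^2$ as well.

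Applying the hypothesis $\sys(3\rp^2) \geq \beta$ to both loops gives $\ell + |c_i| \geq \beta$ for $i = 1, 2$; summing yields $L/2 \geq 2(\beta - \ell)$. On the other hand, since the equator is the core of $\Mob \subset 3\rp^2$, the Blatter--Sakai optimal isosystolic inequality for the M\"obius band (Section~\ref{fourteen}) gives $(L/2)^2 \leq (\pi/\sqrt{8})\,\area(\Mob) \leq \pi/\sqrt{8}$, i.e.\ $L/2 \leq C_{{\rm Bavard}}$. The threshold $0.6276$ is calibrated precisely so that $2(\beta - 0.6276) > C_{{\rm Bavard}}$, producing the required contradiction.

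The main obstacle I expect is the non-contractibility claim for \emph{both} closing loops $\gamma_i$, and particularly for the orientation-preserving one: this step pivots on translating non-triviality of $\alpha$ in the relative group $\pi_1(Y, \partial Y)$ into non-triviality of the closure $\alpha \cdot b^*$ in $\pi_1(Y)$, and thence into $\pi_1(\Sigma_2)$ via the amalgamated free product structure. Once this is secured, the rest of the argument reduces to the direct numerical comparison of $2(\beta - \ell)$ with $C_{{\rm Bavard}}$ described above.
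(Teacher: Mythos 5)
Your lifting analysis is fine (exactly one of the two closing arcs along the equator produces an orientation-reversing loop, and in the orientation-preserving case non-triviality in $\pi_1(Y,\partial Y)$ plus incompressibility of $Y\subset\Sigma_2$ gives non-contractibility), and your numerology correctly reverse-engineers the threshold $0.6276=\tfrac{1}{2}(2\beta-C_{\rm Bavard})$. But the step that closes the argument is not available: you claim that the length $L/2$ of the equator of $3\rp^2$ satisfies $(L/2)^2\leq(\pi/\sqrt{8})\,\area(\Mob)\leq\pi/\sqrt 8$, i.e. $L/2\leq C_{\rm Bavard}$, ``by Blatter--Sakai''. Blatter's and Sakai's results give a \emph{lower} bound for the area of a M\"obius band in terms of its systole and of the length of arcs crossing it; Bavard's inequality bounds the \emph{systole} of a closed Klein bottle. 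Neither bounds from above the length of the particular core curve (the equator) of the M\"obius band piece: the shortest noncontractible loop of $\Mob$ is only freely homotopic to the core, and the core itself can be made arbitrarily long at fixed area and fixed systole. Nothing in the hypotheses forces the equator of $3\rp^2$ to be short, so the contradiction with $|c_1|+|c_2|\geq 2(\beta-\ell)>C_{\rm Bavard}$ does not materialize.

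The paper's proof avoids exactly this trap: the closed curve of controlled length is not the equator but the $J$-invariant simple loop $\delta$ of length at most $2C_{\rm Bavard}$ furnished by Proposition~\ref{141}, i.e. the transplant of a Bavard systolic loop of a companion Klein bottle. A length-minimizing essential arc $c$ must cross $\delta$ in exactly two points (a Freedman--Hass--Scott type argument), which splits $\delta$ into arcs $a',Ja',b',Jb'$ with $\length(a')+\length(b')\leq C_{\rm Bavard}$; one noncontractible loop is $a'\cup c'$, and the second is built using the hyperelliptic involution, namely $c_1\cup c'\cup b'\cup Jc_1$, whose essentiality is detected by its $\Z_2$-intersection with the projection of $\partial Y$. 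So the role you assigned to the equator must be played by $\delta$, and the $J$-symmetry (which your argument never uses) is what produces the second loop whose length is compared with $\length(c)+\length(b')$. To repair your proof you would have to replace the arcs $c_i$ of the equator by arcs of $\delta$ and redo the lifting/essentiality analysis there; as written, the key inequality $L/2\leq C_{\rm Bavard}$ is unjustified and, in general, false.
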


\begin{proof}
Consider a length-minimizing arc~$c$ of~$Y$ with endpoints
in~$\partial Y$ which is not homotopic to an arc of~$\partial Y$
keeping its endpoints fixed.  The arc~$c$ is a nonselfintersecting
geodesic made of two minimizing segments of the same length meeting at
a point~$x$ with $\length(c) = 2 \, \dist(x,\partial Y)$.  If $c$ and
$Jc$ agree (up to orientation) then the arc~$c$ passes through a
Weierstrass point (which agrees with~$x$) and so $\length(c) \geq
\beta$ from~\eqref{42}.  We will therefore suppose otherwise.

\medskip

By Proposition~\ref{141}, the $J$-invariant simple loop $\delta
\subset 3\rp^2$ of length at most $2C_{Bavard}$ lifts to a
$J$-invariant simple loop in~$Y$ with the same length.  This loop will
still be denoted by~$\delta$.

By construction, the loops~$\delta$ and~$\partial Y$ bound a cylinder
in~$Y$.  In particular, the arc~$c$ intersects~$\delta$ at exactly two
points by minimality of~$c$ and~$\delta$, \cf~\cite{FHS}.  These two
points decompose~$c$ into three subarcs $c'$, $c_1$ and~$c_2$ with
$c'$ lying in~$\Sigma_{1,1}$, \cf~Figure~\ref{fig:abc}, that is,
$c=c_1 \cup c' \cup c_2$.  Switching $c_1$ and $c_2$ if necessary, we
can assume that $c_1$ is no longer than~$c_2$.

\begin{figure}[htbp]
\setlength\unitlength{1pt}
\begin{picture}(0,0)(0,0)
\put(-97,-96){\Large $a'$}
\put(-25,-96){\Large $c'$}
\put(34,-96){\Large $Jc'$}

\put(-41,-30){\Large $c_1$}
\put(-41,-158){\Large $c_2$}
\put(50,-158){\Large $Jc_1$}

\put(0,-161){\Large $b'$}

\put(99,-120){\Large $\delta$}
\put(130,-120){\Large $\partial Y$}

\put(-61,-10){\Large $c$}
\put(51,-10){\Large $Jc$}
\end{picture}

\begin{center}
\includegraphics[height=60mm]{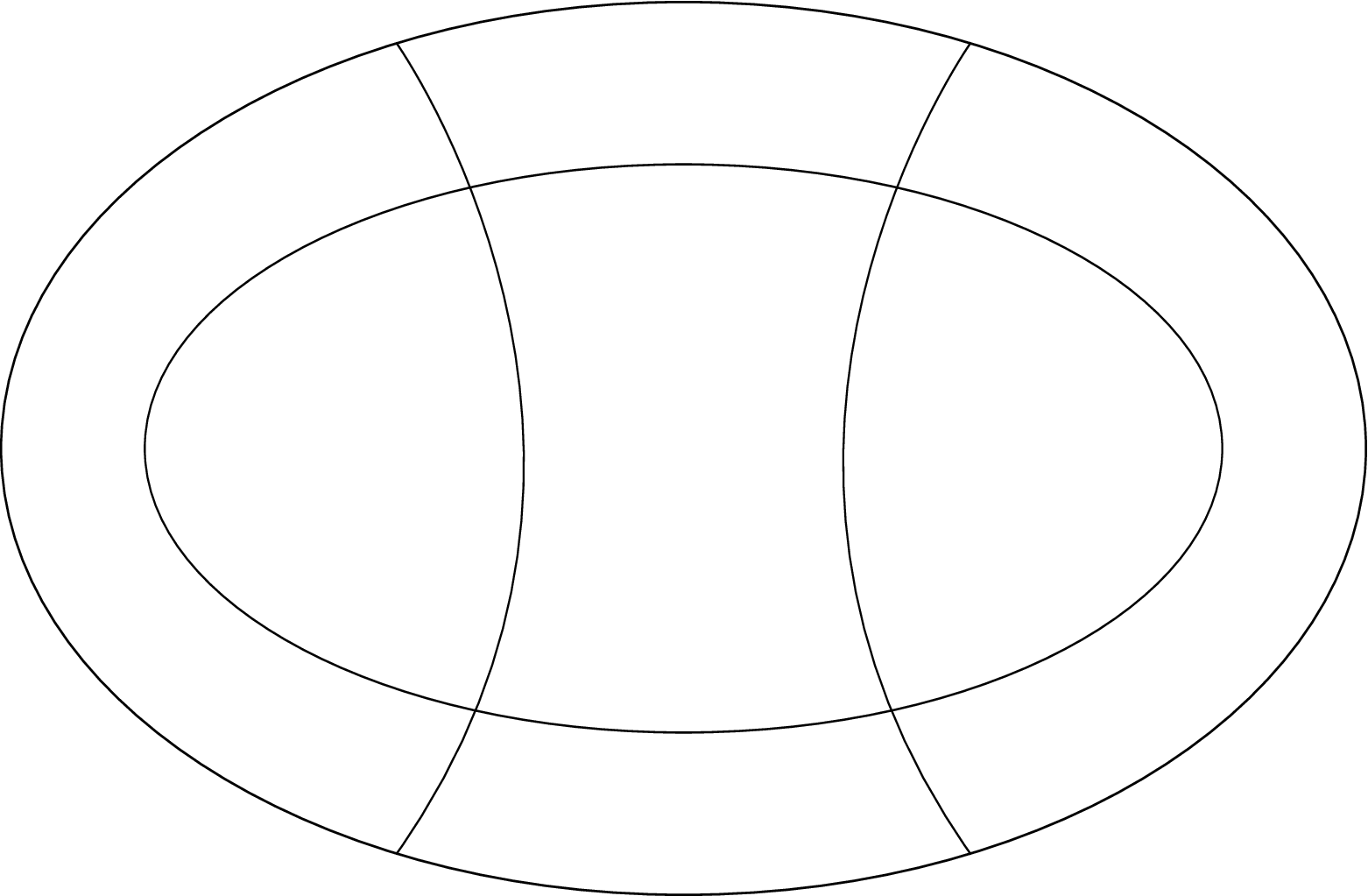}
\end{center}
\caption{Intersection properties of $c$ and $\delta$}
\label{fig:abc}
\end{figure}

The endpoints of~$c'$ and~$Jc'$, where $J$ is the hyperelliptic
involution on~$Y$, decompose~$\delta$ into four arcs $a'$, $Ja'$, $b'$
and~$Jb'$, see Figure~\ref{fig:abc}.  Since $c$ and~$\delta$ are
length-minimizing in their homotopy classes, the loop $a' \cup c'$ is
noncontractible in~$3\rp^2$ and so of length at least~$\beta$.  That
is,
\begin{equation} \label{eq:1}
\length(a') + \length(c') \geq \beta.
\end{equation}
By construction, the arc $c_1 \cup c' \cup b' \cup Jc_1$ with
symmetric endpoints induces a noncontractible loop in~$3\rp^2$.
Indeed, its $\Z_2$-intersection with the projection of~$\partial Y$
in~$3\rp^2$ is nontrivial.  Since the length of this arc is less or
equal to the sum of the lengths of~$c$ and~$b'$, we obtain
\begin{equation} \label{eq:2}
\length(c) + \length(b') \geq \beta.
\end{equation}
On the other hand, we have
\begin{eqnarray}
\length(a') + \length(b') & = & \frac{1}{2} \, \length(\delta)
\nonumber \\ & \leq & C_{Bavard} \label{eq:3}
\end{eqnarray}
Thus, from~\eqref{eq:1}, \eqref{eq:2} and~\eqref{eq:3}, we derive
\[
\length(c) \geq \frac{2\beta-C_{Bavard}}{2} \geq 0.6276.
\]
\end{proof}

We can now introduce the following definition.

\begin{definition}
Let $U_h$ be the $h$-neighborhood of the equator in $3\RP^2$ with 
\[
h=\frac{1}{2}(\beta - \beta^{-1}) \simeq 0.144211.
\]
From Lemma~\ref{lem:retrac} (under the assumption that $\sys(3\rp^2)
\geq \beta$), the boundary components of~$U_h$ are formed of one loop
freely homotopic to the equator and possibly several other
contractible loops.  Strictly speaking, we should first replace $h$
with a nearby regular value of the distance function from the equator.

Define now $\hat{U}_h$ as the union of~$U_h$ and the disks bounded by
its contractible boundary components.  Note that $\hat{U}_h$ is a
M\"obius band which does not contain any Weierstrass point from
Lemma~\ref{lem:dist}
\end{definition}

\begin{proposition}
\label{54}
If $\sys(3\rp^2) \geq \beta$ then $\area(\hat{U}_h)\geq 0.324$.
\end{proposition}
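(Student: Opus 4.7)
The plan is to apply the sharp Blatter--Sakai systolic inequality for M\"obius bands to~$\hat U_h$, after extracting from the construction the two relevant metric constraints on $\hat U_h$.

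I would first verify that $\hat U_h$ is genuinely a M\"obius band whose core is homotopic to the equator. By construction every contractible boundary component of $U_h$ has been capped off; and under the hypothesis $\sys(3\RP^2)\geq\beta$, Lemma~\ref{lem:dist} combined with $h<\beta/2$ ensures that no branch point lies inside $\hat U_h$. Consequently $\hat U_h$ deformation retracts onto the equator and is a M\"obius band whose core is the equator, so the Blatter--Sakai inequality applies.

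Next I would extract the two metric constraints on $\hat U_h$. Its inradius is at least $h$, because $\hat U_h \supset U_h$ and the non-contractible component of $\partial U_h$, which is exactly $\partial\hat U_h$, sits at distance~$h$ from the equator. Its systole is at least $\beta$, because any one-sided simple loop in~$\hat U_h$ generates $\pi_1(\hat U_h)\cong\Z$, whose image in $\pi_1(3\RP^2)$ is the non-trivial equator class, and hence must have length at least $\sys(3\RP^2)\geq\beta$. Substituting $s \geq \beta$ and $w \geq h = \tfrac12(\beta-\beta^{-1})$ into the Blatter--Sakai lower bound for the area of a M\"obius band in terms of its systole and inradius then yields $\area(\hat U_h) \geq 0.324$.

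The main obstacle is the last numerical step: the Blatter--Sakai extremal is a transcendental quantity in~$s$ and~$w$, so producing the specific constant $0.324$ requires an honest numerical evaluation, and one must also confirm that the extremal model can saturate both constraints simultaneously, so that no slack is lost in the final bound. The earlier topological verifications are essentially bookkeeping given the preceding lemmas.
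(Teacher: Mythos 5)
Your proposal follows essentially the same route as the paper: apply the Blatter--Sakai comparison for M\"obius bands to $\hat U_h$, using $\sys(\hat U_h)\geq\beta$ (via $\pi_1$-injectivity of the band whose core is the equator) and the width bound coming from $\partial\hat U_h$ lying at distance $h$ from the equator, with the extremal model a spherical belt. The only piece you leave open --- the ``honest numerical evaluation'' --- is exactly what the paper does explicitly: the model is the antipodal quotient of the $h$-neighborhood of the equator of a sphere of radius $\beta/\pi$, giving $\area(\hat U_h)\geq \frac{2\beta^2}{\pi}\sin\!\left(\frac{\pi h}{\beta}\right)\geq 0.3245$.
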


\begin{proof}
This area lower bound follows from an estimate due to
Blatter~\cite{Bl, Bl2} and used by Sakai~\cite{Sak}.  This is a lower bound for the
area of a M\"obius band in terms of its systole and length of path which
is non-homotopic to the boundary, and with endpoints at the boundary
circle.  The lower bound equals half the area of a spherical belt
formed by an~$h$-neighborhood of the equator of a suitable sphere of
constant curvature.  Here the equator of the suitable sphere has
length~$2\beta$ and its radius is $r = \frac{2\beta}{2\pi}= \frac{\beta}{\pi}$. 
The antipodal quotient of the spherical belt is a
M\"obius band of systole~$\beta$.  

Let~$\gamma$ be the
subtending angle~$\gamma$ of the northern half of the belt.
Then~$\gamma$ satisfies
\[
\gamma = \frac{h}{r} = \frac{h\pi}{\beta}=h\pi\beta^{-1}\geq 0.392403.
\]
(here we use the values~$h\geq 0.144211$ and~$\beta^{-1}\geq
0.866133$).  The height function is the moment map (Archimedes's
theorem), and hence the area of the belt is proportional to~$\sin
\gamma \geq 0.382434$.  The area of the corresponding region on the
unit sphere is~$4\pi \sin \gamma$.  Hence the area of the spherical
belt is~$4 \pi r^2 \sin \gamma = \frac{4 \pi \beta^2 \sin
  \gamma}{\pi^2}$, which after quotienting by the antipodal map yields
a lower bound
\[
\area(\hat{U}_h) \geq\frac{2\beta^2\sin\gamma}{\pi}= \frac{2 (1.333) \sin
\gamma}{\pi} \geq 0.324539,
\]
proving the proposition.
\end{proof}

Let us show that the same lower bound holds for the area of the M\"obius band~$\Mob$ in~$3\rp^2$.
For this purpose, we will need the following result.

\begin{lemma}
\label{53b}
If $\sys(3\RP^2) \geq \beta$ then $\length(\Delta) \leq \beta^{-1}$.
\end{lemma}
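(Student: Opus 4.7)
The plan is to derive the bound by applying Loewner's torus inequality to the companion torus $\T_{a,b}$. First, I would observe that the hypothesis $\sys(3\rp^2) \geq \beta$ forces us into the annulus decomposition case of Proposition~\ref{141}, because $\beta \approx 1.1545$ exceeds the Bavard constant $C_{\mathrm{Bavard}} \approx 1.0539$, ruling out the existence of a Bavard loop on~$3\rp^2$. Proposition~\ref{141}(5) then supplies a loop $\Delta \subset \hat{\C}^+$ whose lift is a systolic loop on the companion torus~$\T_{a,b}$.

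Second, I would trace through the lifts to express $\length(\Delta)$ in terms of $\sys(\T_{a,b})$. After averaging the metric by~$J$ and~$\tau$ as in Proposition~\ref{32}, the systolic lift should be a connected $J$-invariant curve that double-covers~$\Delta$, yielding $\sys(\T_{a,b}) = 2\length(\Delta)$. This is the delicate step: the naive monodromy count for $Q \colon \T_{a,b} \to \hat{\C}$ (two of its four branch points lie inside~$\Delta$, giving trivial monodromy) predicts instead a disconnected preimage $Q^{-1}(\Delta)$ consisting of two $1{:}1$ lifts, so one must argue that the relevant systolic loop comes from the $J$-equivariant structure rather than from either individual component.

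Third, Loewner's torus inequality~\eqref{11e} applied to~$\T_{a,b}$, whose area equals $2\area(3\rp^2) = 2$ by the area relations following~\eqref{53}, yields
\[
\sys(\T_{a,b})^2 \leq \tfrac{2}{\sqrt 3} \cdot 2 = \tfrac{4}{\sqrt 3},
\]
hence $\length(\Delta) \leq 3^{-1/4}$. The numerical comparison $3^{-1/4} < \beta^{-1}$, equivalent to $\beta^2 = 1.333 \leq \sqrt{3} \approx 1.732$, closes the argument.

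The main obstacle is precisely the factor-of-two identification in the second step: without it, Loewner's bound only yields $\length(\Delta) \leq 2 \cdot 3^{-1/4} \approx 1.52$, too weak by a full factor of two. Proposition~\ref{comm} (the commutation of $J$ and~$\tau$) together with Lemma~\ref{lem:circle} (which describes how $\tau$ and $\tau \circ J$ act on the preimage of the equator in~$\T_{a,b}$) should provide the technical tools needed to single out the correct $J$-invariant systolic lift.
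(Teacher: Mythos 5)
Your route breaks down at the step you yourself flag as delicate: the identification $\sys(\T_{a,b})=2\length(\Delta)$ is not just unproved, it is false, and in the wrong direction. Since $\Delta$ encloses exactly two of the four branch points of $Q\colon\T_{a,b}\to\hat\C$, its preimage consists of two circles each mapping $1{:}1$, and such a circle is itself noncontractible on the torus --- indeed, by Proposition~\ref{141}(5) it \emph{is} a systolic loop, so $\sys(\T_{a,b})=\length(\Delta)$. No $J$- or $\tau$-equivariant selection can restore a factor of~$2$, because the systole is bounded above by the length of \emph{any} noncontractible loop, in particular by this $1{:}1$ lift. With the correct identification, Loewner's inequality~\eqref{11e} on the companion torus of area~$2$ only gives $\length(\Delta)\leq 2\cdot 3^{-1/4}\approx 1.52$, and even Bavard's optimal bound~\eqref{11f} on the companion Klein bottle of area~$1$ only gives $\length(\Delta)\leq C_{\rm Bavard}\approx 1.054$; both fall short of the required $\beta^{-1}\approx 0.866$. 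This shows a structural defect of the approach: you use the hypothesis $\sys(3\rp^2)\geq\beta$ only to rule out Bavard loops, whereas no purely conformal-systolic bound on a companion surface can reach $\beta^{-1}$.

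The paper's proof is of a different nature: it is an area-budget contradiction on the unit-area surface. Assuming $\length(\Delta)\geq\beta^{-1}$, one applies the coarea inequality to the distance function $d_{eq}$ from the equator on $\hat\C^+$ over the strip $h\leq d_{eq}\leq\tfrac{\beta}{2}$; by Lemma~\ref{lem:dist} the branch points lie at distance at least $\tfrac{\beta}{2}$, so each noncontractible level component in this range is at least as long as $\Delta$, giving $\area(S)\geq\beta^{-1}\left(\tfrac{\beta}{2}-h\right)=\tfrac12\beta^{-2}\geq 0.374$. Lifting $S$ to the surface doubles this area, and it is disjoint from $\hat U_h$, whose area is at least $0.324$ by the Blatter--Sakai estimate of Proposition~\ref{54}. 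The total $2(0.374)+0.324=1.072$ exceeds the normalization $\area(3\RP^2)=1$, a contradiction. If you want to salvage your idea, note that the quantitative input $\sys(3\rp^2)\geq\beta$ must enter through distances to the equator and an area count, not through a systolic inequality on a companion surface alone.
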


\begin{proof}
Let~$d_{eq}: \hat\C^+ \to \R$ be the distance function from the equator of~$\C^+$ endowed with the metric inherited from~$3\rp^2$.  
The equator is at distance at least~$\tfrac{1}{2}\beta$ from each of the three isolated branch points by Lemma~\ref{lem:dist}.
Each noncontractible connected component of the level curves of~$d_{eq}$ at distance at most $\frac{1}{2} \beta$ from the equator is longer than~$\Delta$. 
The coarea inequality gives a lower bound for the area of the strip~$S$ formed by the noncontractible connected components of the level curves corresponding to distances
\[
h \leq d_{eq} \leq \tfrac{1}{2}\beta.
\]
By construction, this strip is disjoint from the projection of~$\hat{U}_h$ on~$\C^+$.

If~$\length(\Delta) \geq \beta^{-1}$, we obtain the following area lower
bound for the strip:
\[
\area(S) \geq
\beta^{-1}\left(\tfrac{\beta}{2}-h\right)=\tfrac{1}{2}\beta^{-2}\geq
0.374.
\]
Now, the strip~$S$ lifts to a region on
$\Sigma_{1,1}$ of double the area, namely area at least~$0.748$, disjoint from~$\hat{U}_h$.
Combined with the lower bound of~$0.324$ for the area of $\hat{U}_h$ as in
Proposition~\ref{54}, this gives a total lower bound
\[
\area(3\RP^2) \geq  2 \, \area(S) + \area(\hat{U}_h) \geq 0.748 + 0.324 = 1.072
\]
for the area of~$3\RP^2$, which contradicts the original normalisation
$\area(3\RP^2)=1$.
\end{proof}

\begin{proposition}
\label{base}
If $\sys(3\RP^2) \geq \beta$ then the loop~$\Delta$ is at distance at least~$h$ from the equator.
In particular, $\area(\Mob) \geq 0.324$.
\end{proposition}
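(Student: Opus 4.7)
The plan is to argue by contradiction for the distance statement, then deduce the area bound as an essentially topological consequence. Assume $\sys(3\RP^2)\ge\beta$ and suppose, for contradiction, that $\dist(\Delta,\text{Eq})<h$. Since the branched double cover $3\RP^2\to\hat\C^+$ restricts to a local isometry on the complement of the (finite) ramification set, and the distance between $\Delta$ and the equator is realized off that set, this hypothesis is equivalent to $\dist(\delta,\text{Eq})<h$ in $3\RP^2$. Let $(x,y)\in\delta\times\text{Eq}$ achieve this distance and let $\gamma$ be a shortest path from $x$ to $y$; the first variation formula ensures that $\gamma$ meets the equator orthogonally at $y$. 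Using the $J$-invariance of the metric afforded by Proposition~\ref{32}, I form the twin path $J(\gamma)$, joining $x':=J(x)\in\delta$ to $J(y)=y$ with the same length, where $x'$ is the second preimage of $Q(x)\in\Delta$. Picking one of the two arcs $\eta$ of $\delta$ from $x'$ to $x$, I assemble the loop
\[
L:=\gamma\cdot J(\gamma)^{-1}\cdot\eta,
\]
whose length is strictly less than $2h+\length(\eta)\le 2h+\beta^{-1}=\beta$, using Lemma~\ref{53b} and $h=\tfrac12(\beta-\beta^{-1})$.

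The decisive step will be to show that $L$ is essential in $3\RP^2$, contradicting the systolic hypothesis. I plan to do this by checking that $L$ has odd $\Z/2$-intersection with the equator; since the equator is one-sided it represents a nonzero class in $H_1(3\RP^2;\Z/2)$, so odd intersection forces $[L]\ne 0$. The arc $\eta\subset\delta$ does not meet the equator because $\delta$ and the equator are the disjoint boundary and core of the M\"obius band $\Mob$. At the meeting point $y$, $\gamma$ arrives orthogonally to the equator, and $J$ acts locally as the orthogonal reflection across the equator (in suitable coordinates $J$ is literally complex conjugation fixing the real axis); hence $\gamma$ approaches $y$ from one side of the equator while $J(\gamma)^{-1}$ leaves $y$ on the opposite side. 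The loop $L$ therefore crosses the equator transversally exactly once at $y$. The main technical point is this reflection picture of $J$ at a point of the fixed equator, which is a standard consequence of $J$ being an involutive isometry with a smooth fixed curve.

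For the area bound, once $\dist(\delta,\text{Eq})\ge h$ is established, any path from the equator to a point of $\Sigma_{1,1}$ must cross $\delta$ and so has length at least $h$; this gives $U_h\subset\Mob$. Each disk $D_i$ capping a contractible boundary component of $U_h$ also lies in $\Mob$: its boundary sits in $\Mob$ and is disjoint from $\delta$, so $D_i\cap\delta$ is a (possibly empty) disjoint union of simple closed curves in $\delta$. Any such component would have to coincide with the entirety of the simple loop $\delta$ and then bound a disk in $D_i\subset 3\RP^2$; this is impossible because $\delta$ is not nullhomotopic in $3\RP^2$. Thus $D_i\cap\delta=\emptyset$ and $D_i$ sits entirely in $\Mob$. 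Hence $\hat U_h\subset\Mob$ and Proposition~\ref{54} yields $\area(\Mob)\ge\area(\hat U_h)\ge 0.324$.
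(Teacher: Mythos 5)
Your argument is correct and is essentially the paper's own proof: a short arc from $\Delta$ to the equator concatenated with (a lift of) $\Delta$ yields a noncontractible loop of length less than $\beta^{-1}+2h=\beta$ via Lemma~\ref{53b}, and the area statement then follows from Proposition~\ref{54} once $\hat{U}_h\subset\Mob$. Your loop $\gamma\cdot J(\gamma)^{-1}\cdot\eta$ is precisely the lift to $3\RP^2$ of the paper's loop $\gamma\cup\Delta\cup\gamma^{-1}$, and the extra details you supply (transversal mod~$2$ intersection with the equator at the single point $y$, and the capping disks of $U_h$ lying in $\Mob$) only make explicit what the paper leaves implicit.
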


\begin{proof}
By contraposition, we consider an arc~$\gamma$ of length less than~$h$ connecting~$\Delta$ to the equator.
By Lemma~\ref{53b}, we have~$\length(\Delta) \leq \beta^{-1}$.  
Then the path
\[
\gamma \cup \Delta \cup \gamma^{-1}
\]
produces a noncontractible loop on $3\RP^2$ of length less than
\[
\length(\Delta) + 2h \leq \beta^{-1}+ 2h= \beta,
\]
proving the first statement of the proposition.

The second statement follows from Proposition~\ref{54} since the M\"obius band~$\Mob$ in~$3\RP^2$ contains~$\hat{U}_h$.
\end{proof}

\forget
\begin{proposition}
If $\area(\Mob)\leq 0.324$ then $\sys(3\RP^2)\leq\beta$.
\end{proposition}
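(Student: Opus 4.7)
The plan is to prove the distance bound by contraposition, then derive the area estimate as a consequence. Assume $\sys(3\RP^2)\geq\beta$ and suppose for contradiction that an arc $\gamma\subset\hat\C^+$ of length less than $h$ joins a point of $\Delta$ to the equator. Combining this with the bound $\length(\Delta)\leq\beta^{-1}$ of Lemma~\ref{53b}, I form the concatenation $\gamma\cup\Delta\cup\gamma^{-1}$, which is a closed path in $\hat\C^+$ based at a point of the equator, of total length strictly less than
\[
\length(\Delta)+2\length(\gamma)\leq\beta^{-1}+2h=\beta^{-1}+(\beta-\beta^{-1})=\beta.
\]

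The central step is to lift this path to $3\RP^2$ and show the lift is noncontractible. Because its basepoint lies on the equator, which belongs to the branch locus of the cover $3\RP^2\to\hat\C^+$ and has a unique preimage, the lift automatically closes up. The projection is a local isometry off the branch set and restricts isometrically to the equator, so the lift has the same total length, still less than $\beta$. To verify noncontractibility, I would exploit the fact that $\delta$ (the connected double cover of $\Delta$) bounds the M\"obius band $\Mob$ and is essential in $3\RP^2$: the middle segment of the lift traces half of $\delta$, while the two lifted $\gamma$-tails join the antipodal endpoints on $\delta$ back to the common basepoint on the equator, so collapsing the tails exhibits the lifted loop as freely homotopic to the essential core of $\Mob$. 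This contradicts $\sys(3\RP^2)\geq\beta$ and forces $\Delta$ to lie at distance at least $h$ from the equator.

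For the area estimate, the first part pushes the loop $\delta$ at distance at least $h$ from the equator in $3\RP^2$. Since $\delta$ bounds $\Mob$ on the side containing the equator, the open $h$-neighborhood $U_h$ is contained in $\Mob$; the contractible boundary components of $U_h$ are small circles lying well inside $\Mob$, and the disks they bound remain inside $\Mob$ as well, so $\hat{U}_h\subset\Mob$. The Blatter--Sakai estimate recorded in Proposition~\ref{54} then yields $\area(\Mob)\geq\area(\hat{U}_h)\geq 0.324$, as required.

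The main obstacle I expect is the topological bookkeeping of the lift: one must simultaneously ensure that the lifted path closes up at the equator-basepoint, identify its free homotopy class with the core of $\Mob$ so as to conclude noncontractibility, and confirm that the filled M\"obius band $\hat{U}_h$ genuinely sits inside $\Mob$ rather than spilling through $\delta$ into $\Sigma_{1,1}$. Once these topological assertions are secured, the quantitative content collapses to the algebraic identity $\beta^{-1}+2h=\beta$ together with the invocations of Lemma~\ref{53b} and Proposition~\ref{54}.
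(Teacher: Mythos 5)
Your proposal is correct and is essentially the paper's own argument: this statement is just the contrapositive of Proposition~\ref{base}, whose proof runs exactly as you describe --- assume $\sys(3\RP^2)\geq\beta$, use Lemma~\ref{53b} to get $\length(\Delta)\leq\beta^{-1}$, note that an arc $\gamma$ of length less than $h$ from $\Delta$ to the equator would produce a noncontractible loop $\gamma\cup\Delta\cup\gamma^{-1}$ of length less than $\beta^{-1}+2h=\beta$, and then conclude $\hat{U}_h\subset\Mob$ and invoke Proposition~\ref{54}. The one loose point is your justification of noncontractibility: ``collapsing the tails'' is not a homotopy, and the lifted loop need not be freely homotopic to the core of $\Mob$ if $\gamma$ wanders out of the region between $\Delta$ and the equator; the conclusion is nevertheless easily secured, either by truncating $\gamma$ at its last meeting with $\Delta$ and first meeting with the equator (so that the lifted loop lies in $\Mob$ and crosses the equatorial circle exactly once), or by observing that the lift has nontrivial $\Z_2$-intersection with the equatorial circle --- the same device the paper uses in Lemma~\ref{lem:retrac} --- whereas the paper itself simply asserts this step without proof.
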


\begin{proof}
If the loop $\Delta$ does not come within distance~$h$ of the equator,
then the M\"obius band $\Mob \subset 3\RP^2$ contains the
$h$-neighborhood of the equator, and therefore satisfies the area
lower bound of Proposition~\ref{54}, contradicting our hypothesis.  Hence
the loop $\Delta$ necessarily comes within distance~$h$ of the
equator, and the proposition results from Proposition~\ref{base}.
\end{proof}

The remaining case to consider is the case when $\area(\Mob)\geq
0.324$ and therefore~$\area(\Sigma_{1,1})\leq 0.676$.

\begin{proposition}
\label{163}
If~$\area(\Sigma_{1,1})\leq 0.676$,
then~$\sys(\Sigma_{1,1})\leq\beta$.
\end{proposition}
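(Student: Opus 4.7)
The plan is to exploit the hyperelliptic structure: by the annulus decomposition, $\Sigma_{1,1}$ is the preimage in $3\rp^2$ of the disk $D \subset \hat\C^+$ bounded by $\Delta$, under the branched double cover $3\rp^2 \to \hat\C^+$ ramified over the three isolated branch points $a,b,c \in D$. The metric on $D$ pushed down from $\Sigma_{1,1}$ has area at most $\area(\Sigma_{1,1})/2 \leq 0.338$, and its boundary $\Delta$ has length at most $\beta^{-1}$ by Lemma~\ref{53b}.

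The central observation is the following recipe for essential loops. For any two of the three branch points, say $a$ and $b$, a length-minimizing path $\gamma$ in $\Sigma_{1,1}$ from $a$ to $b$ together with its image $J\gamma$ under the hyperelliptic involution forms a simple closed loop of length $2\,\dist_{\Sigma_{1,1}}(a,b)$, since $J$ fixes both endpoints and the two sheets are swapped along~$\gamma$. This loop is the preimage under the branched cover of a simple arc in $D$ joining $a$ and $b$; it represents a nontrivial class in $H_1(\Sigma_{1,1})$ and is therefore noncontractible both in $\Sigma_{1,1}$ and, by the incompressibility of $\Sigma_{1,1} \hookrightarrow 3\rp^2$, in $3\rp^2$. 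Hence it suffices to produce two of the three Weierstrass points at distance at most $\beta/2$ in $\Sigma_{1,1}$.

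I would argue by contradiction, assuming simultaneously $\sys(3\rp^2) \geq \beta$ and $\dist_{\Sigma_{1,1}}(p_i,p_j) > \beta/2$ for every pair. The three open balls $B_{\beta/4}(p_i)$ are then pairwise disjoint, and because $\beta/4 < \sys/2$ each is an embedded topological disk in $\Sigma_{1,1}$. In the model Euclidean case one has $3 \cdot \pi(\beta/4)^2 = 3\pi\beta^2/16 \approx 0.785$, which exceeds $0.676$ by the needed margin; a corresponding Riemannian lower bound on the ball areas would force three disjoint disks totaling more than $\area(\Sigma_{1,1})$, yielding the desired contradiction and hence an essential loop of length at most $\beta$.

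The main obstacle is thus establishing the area lower bound $\area(B_{\beta/4}(p_i)) \geq \pi(\beta/4)^2$, or a slightly weaker variant with the same conclusion, without any \emph{a priori} curvature bound. I would attack this via the Gauss--Bonnet identity $L'(r) = 2\pi - \int_{B_r} K\,dA$ combined with the coarea formula $\area(B_r) = \int_0^r L(s)\,ds$, controlling the total positive part of $\int K$ on the three disjoint balls by coupling Gauss--Bonnet on $3\rp^2$ (total curvature $-2\pi$) with the $J$-symmetry of the averaged metric at the Weierstrass points and with Bol--Fiala-type isoperimetric estimates valid for simply connected regions. Should this direct route prove too delicate, an alternative is to pass to the quotient disk $D$ and replace the ball-packing step by a coarea argument on the distance function from the set $\{a,b,c\}$ inside $\Sigma_{1,1}$, using $\length(\Delta) \leq \beta^{-1}$ to absorb the boundary contribution in the style of Lemma~\ref{53b}.
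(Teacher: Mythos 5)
There is a genuine gap, and you have correctly located it yourself: the whole argument hinges on the area lower bound $\area\bigl(B_{\beta/4}(p_i)\bigr)\geq \pi(\beta/4)^2$, and no such Euclidean comparison holds for Riemannian metrics without curvature assumptions. A metric with a long thin spike whose tip is a Weierstrass point has balls of arbitrarily small area at that point while the systole (and the $J$-invariance, since the tip may be a $J$-fixed point) is unaffected; so neither Gauss--Bonnet, which only controls the \emph{total} curvature of $3\rp^2$ and not the positive part concentrated near one point, nor Bol--Fiala type isoperimetric inequalities, which require an upper curvature bound, can deliver the constant $\pi$. The curvature-free bound actually available in this literature is $\area(B(R))\geq 2R^2$ for $R\leq \sys/2$, and even that is only a ``without loss of generality'' statement obtained by modifying the metric (\cite[Lemma~3.5]{KS1}, used in Section~\ref{sec:last}). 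With that constant your packing gives only $3\cdot 2(\beta/4)^2=\tfrac{3}{8}\beta^2\approx 0.50<0.676$, so the contradiction evaporates; this numerical shortfall is precisely why the ball-packing strategy of Section~\ref{sec:last} yields only $\sys^2\leq\tfrac{8}{5}\area$ for $n=3$ and why Dyck's surface needs a separate argument. Your fallback (coarea for the distance from $\{a,b,c\}$ inside $\Sigma_{1,1}$) meets the same obstruction: distance circles around a single Weierstrass point are contractible in $3\rp^2$, so there is no length lower bound on the level curves and hence no area estimate.

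The paper's route avoids ball areas altogether. Under the normalization $\area(3\rp^2)=1$ and the assumption $\sys\geq\beta$, the boundary circle of $\Sigma_{1,1}$ has length $2\,\length(\Delta)\leq 2\beta^{-1}$ by Lemma~\ref{53b}; one caps it off with a round hemisphere whose equator has that length, of area $\tfrac{2}{\pi}\length(\Delta)^2\leq \tfrac{2}{\pi\beta^2}\approx 0.478$. The resulting closed torus has area at most $0.676+\tfrac{2}{\pi\beta^2}$, and Loewner's optimal inequality \eqref{11e} gives a noncontractible loop of square-length at most $\tfrac{2}{\sqrt{3}}\bigl(0.676+\tfrac{2}{\pi\beta^2}\bigr)\leq 1.333=\beta^2$; since any arc in a round hemisphere joining boundary points is at least as long as the shorter boundary arc, the loop may be pushed off the cap into $\Sigma_{1,1}$ without increasing length. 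If you want to salvage your approach, this is the step to replace: you must convert the hypothesis $\area(\Sigma_{1,1})\leq 0.676$ into a short essential loop via an \emph{optimal} closed-surface inequality (Loewner here, Bavard/Blatter for the M\"obius side), not via pointwise ball estimates. Your auxiliary claims (the loop $\gamma\cup J\gamma$ over a minimizing arc between two Weierstrass points being essential, and $\sys\leq 2\dist(p_i,p_j)$) are fine and consistent with the paper's toolkit, but by themselves they do not close the argument.
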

\forgotten

We can now proceed to the proof of Theorem~\ref{15b}.

\begin{proof}[Proof of Theorem~\ref{15b}]
Suppose that $\sys(3\rp^2) \geq \beta$.
The surface $3\RP^2$ is separated into the union
\[
3\RP^2 = \Sigma_{1,1} \cup \Mob,
\]
where $\area(\Sigma_{1,1})\leq 0.676$ and $\area(\Mob)\geq 0.324$ from Proposition~\ref{base}.
Here, the separating loop is isometric to a circle twice as long as~$\Delta$, and therefore of radius
\begin{equation*}
r = \frac{1}{\pi} \, \length(\Delta).
\end{equation*}
The area of a hemisphere based on such a circle is
\begin{equation*}
2\pi r^2 = \frac{2}{\pi}\, \length(\Delta)^2 \leq \frac{2\beta^{-2}}{\pi}
\end{equation*}
by Lemma~\ref{53b}.  Attaching the hemisphere to the torus with a disk
removed produces a torus of total area at
most~$\frac{2}{\pi\beta^2}+0.676$.  Applying Loewner's
bound~\eqref{11e} to the resulting torus, we obtain a systolic loop of
square-length at most
\begin{equation*}
\frac{2}{\sqrt{3}}\left(\frac{2}{\pi\beta^2}+0.676\right)\leq 1.333,
\end{equation*}
proving the theorem.
\end{proof}

\section{Other hyperelliptic surfaces} \label{sec:last}

In this section, we prove Theorem~\ref{theo:main} in the remaining
case~$n \geq 4$.  Recall that a non-orientable surface is called {\em
hyperelliptic\/} if its orientable double cover is.

\begin{proposition}
Let~$n\geq 4$.  Every Riemannian metric from a hyperelliptic conformal
type on the surface~$n\rp^2$ satisfies the bound
\[
\frac{\sys^2}{\area} \leq \left( \frac{1}{4} +
\frac{n}{8} \right)^{-1}.
\]
In particular,
\[
\sys^2 \leq 1.333 \; \area.
\]
\end{proposition}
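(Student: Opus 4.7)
The plan is to establish a hyperelliptic systolic inequality for the Klein surface $X = n\rp^2$, analogous to the Katz--Sabourau bound $\sys^2/\area \leq 4/(g+1)$ for orientable genus-$g$ hyperelliptic surfaces \cite{KS1}, with $X$ playing the role of a hyperelliptic surface of ``effective genus'' $n/2$. Let $\tilde X = \Sigma_{n-1}$ denote the orientable hyperelliptic double cover. By Proposition~\ref{32} I first average the metric under $\langle J,\tau\rangle \cong \Z_2\times\Z_2$ to make it both $J$- and $\tau$-invariant without decreasing the systolic ratio. Writing $A = \area(X)$, this gives $\area(\tilde X) = 2A$, $\area(\hat\C) = A$, and $\area(\hat\C^+) = A/2$, with the hemispherical double cover $X \to \hat\C^+$ branched at the $n$ interior Weierstrass points $p_1,\ldots,p_n$ and along the boundary equator $\hat\R$.

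The heart of the argument is to exhibit a short arc $\alpha$ in $\hat\C^+$, connecting two distinct special objects (two Weierstrass points, or a Weierstrass point and a point of $\hat\R$), whose length $d$ satisfies
\[
d^2 \;\leq\; \frac{2A}{n+2}.
\]
This is the Klein analog of the shortest-arc estimate underlying the hyperelliptic bound of \cite{KS1}: the equator $\hat\R$, fixed set of the antiholomorphic involution $\tau$, should count as ``two additional Weierstrass points'' in the extremal-length/packing analysis, so that the relevant denominator becomes $n+2$ rather than the $n$ produced by applying \cite{KS1} naively to $\tilde X$. Concretely, on the doubled sphere $\hat\C$ of area $A$ one sees $2n$ Weierstrass points together with a distinguished $\tau$-invariant equator, and the $\tau$-symmetry of this configuration provides the improvement.

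The arc $\alpha$ then lifts to a closed curve $\tilde\alpha \cup J\tilde\alpha \subset X$ of length $2d$, the two halves being exchanged by the deck transformation of the hemispherical cover $X \to \hat\C^+$. A transverse-intersection argument of the type used in Lemma~\ref{meet} and Proposition~\ref{36} then shows this loop is non-contractible in $X$. Hence
\[
\sys(X)^2 \;\leq\; 4d^2 \;\leq\; \frac{8A}{n+2} \;=\; \bigl(\tfrac14 + \tfrac n 8\bigr)^{-1}\area(X),
\]
and the ``in particular'' consequence follows from $n \geq 4$.

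The main obstacle is the packing estimate $d^2 \leq 2A/(n+2)$, namely extracting the ``$+2$'' contributed by the $\tau$-invariant equator. A naive application of the hyperelliptic bound from \cite{KS1} to $\tilde X$ yields only $d^2 \leq 2A/n$, hence $\sys^2/\area \leq 8/n$, which suffices for $n \geq 6$ but leaves $n \in \{4,5\}$ uncovered. Recovering the sharper constant requires genuinely exploiting the $\tau$-symmetry, most naturally by a coarea/packing argument on the hemisphere $\hat\C^+$ that places the equator on equal footing with the Weierstrass points. A secondary subtlety lies in verifying the non-contractibility of the lifted loop when $\alpha$ joins two Weierstrass points (rather than a Weierstrass point and the equator), where the intersection-number calculation must be corrected by a $J$-symmetry count.
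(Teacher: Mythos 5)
Your overall strategy points in the right direction, and your intuition that the equator must be counted as an extra contribution (the ``$+2$'') is exactly what makes the constant $\bigl(\tfrac14+\tfrac n8\bigr)^{-1}=\tfrac{8}{n+2}$ attainable. But as written the argument has a genuine gap, and you have in fact flagged it yourself: the packing estimate $d^2\le 2A/(n+2)$ is only asserted by analogy with \cite{KS1}, not proved, and you concede that the naive transfer of the orientable hyperelliptic bound to $\tilde X$ gives only $8/n$, which fails precisely for $n\in\{4,5\}$. That missing estimate is the entire content of the proposition; everything else in your outline (averaging by Proposition~\ref{32}, lifting a short arc and doubling it by $J$, checking noncontractibility as in Lemma~\ref{lem:dist}) is the easy part. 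A proof must actually carry out the quantitative step, and the way the paper does it is instructive: one works not on the hemisphere $\hat\C^+$ but on the bordered orientable surface $Y\subset\Sigma_{n-1}$ lying over $\hat\C^+$, which has one boundary circle when $n$ is odd and two when $n$ is even. Setting $L=\sys(X)$, one shows by transplanting arcs to noncontractible loops of $X$ that the $n$ Weierstrass points of $Y$ are pairwise at distance $\ge L/2$, are at distance $\ge L/2$ from $\partial Y$, and (for $n$ even) that the two boundary circles are at distance $\ge L/2$ from each other. Then the $n$ disks of radius $L/4$ about the Weierstrass points each have area $\ge L^2/8$ by the hyperelliptic ball estimate $\area B(R)\ge 2R^2$ of \cite[Lemma~3.5]{KS1}, while the $L/4$-collar of each boundary circle has area $\ge L^2/4$ by the coarea inequality, because every level curve of the distance from $\partial Y$ within that collar has a component that separates the branch points from the equator and is therefore noncontractible in $X$, hence of length $\ge L$. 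Summing these disjoint contributions gives $\area(X)\ge\bigl(\tfrac14+\tfrac n8\bigr)L^2$ for $n$ odd (and the even case is even better, with $\tfrac12$ in place of $\tfrac14$). None of steps (a)--(c) appears in your proposal, and the collar estimate in particular cannot be phrased purely as a ``shortest arc'' bound on $\hat\C^+$: the length lower bound for the level curves comes from the systole itself, not from the arc you are trying to bound, so the argument is most naturally run as an area lower bound under the assumption $\sys(X)=L$ rather than as an existence statement for a short arc.

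A secondary point you should also settle rather than defer: when the short object joins two Weierstrass points, the loop obtained by doubling under $J$ is noncontractible because its projection to $\hat\C^+$ has odd winding about exactly two branch points (the standard hyperelliptic argument, as in \cite{KS1}); when it joins a Weierstrass point to the equator, noncontractibility is Lemma~\ref{lem:dist}; and for $n$ even one needs the additional case of an arc joining the two boundary circles of $Y$, where the paper argues by collapsing the complement of a collar of the equator to obtain a complex with fundamental group $\Z$ and checking the image class is twice a generator. Your proposal does not distinguish the parity of $n$ at all, yet the topology of $\partial Y$ (one circle versus two) changes both the list of cases and the resulting constant.
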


\begin{proof}
Let $L=\sys(X)$.  Without loss of generality, we can assume that the
hyperelliptic invariant metric on~$X$ has the property that the area
of every disk $B(R)$ of radius~$R$ with $R \leq L/2$ satisfies
\begin{equation}
\label{61} 
\area(B(R)) \geq 2R^2,
\end{equation}
see \cite[Lemma~3.5]{KS1}.

For $n$ even with $n=2k \geq 4$, the orientable double cover of the
surface~$X=\Sigma_{k-1} \# K$ is a hyperelliptic
surface~$\Sigma_{2k-1}$ of genus $2k-1$.  As in
Lemma~\ref{lem:circle}, the preimage of the equatorial
circle~$\hat{\R} \subset \hat{\C}$ under the double cover
$\Sigma_{2k-1} \to \hat{\C}$ is a pair of disjoint circles.  This pair
of circles bounds the preimage $Y \subset \Sigma_{2k-1}$ of the
northern hemisphere~$\hat{\C}^+$ under the previous double cover.  The
orientation-reversing involution~$\tau$ on~$\Sigma_{2k-1}$ switches
the two boundary components of~$Y$.  We can obtain~$X$ from~$Y$ by
identifying the pairs of points of~$\partial Y$ corresponding to the
orbits of the involution~$\tau$.  Alternatively, we can define~$Y$ by
compactifying the open surface $X \setminus \pi({\rm Fix}(J \circ
\tau))$, where $\pi: \Sigma_{2k-1} \to X$ is the quotient map induced
by~$\tau$.

Let $\gamma$ be a length-minimizing arc of~$Y$ joining the two
boundary components of~$Y$.  The hyperelliptic involution~$J$ takes
the endpoints of the $1$-chain $\gamma \cup (-J\gamma)$ of~$Y$ to
their opposite.  As these endpoints lie in~$\partial Y$, and since $J$
and~$\tau$ agree on~$\partial Y$, the previous $1$-chain of~$Y$
induces a loop~$c$ in~$X$.  This loop is noncontractible in~$X$.
Indeed, let $X'$ be the complex with fundamental group isomorphic
to~$\Z$ obtained by collapsing the region of~$X$ outside a
sufficiently small tubular neighborhood of the equator to a point.  By
construction, the loop~$c$ of~$X$ projects to a loop of~$X'$
representing twice a generator of~$\pi_1 X' \simeq \Z$.  Thus, $c$ is
noncontractible in~$X$ and so of length at least~$L$.  We deduce that
the distance between the two boundary components of~$Y$ is at
least~$\frac{L}{2}$.  Similarly, the distance between the Weierstrass
points of~$Y$ and its boundary components is at least~$\frac{L}{2}$,
see Lemma~\ref{lem:dist}.  Likewise, the distance between any pair of
Weierstrass points is at least~$\frac{L}{2}$.  This shows that the
open disks~$D_i$ of radius~$\frac{L}{4}$ centered at the $2k$
Weierstrass points of~$Y$ and the open $\frac{L}{4}$-tubular
neighborhoods, $U_1$ and~$U_2$, of the boundary components of~$Y$ are
pairwise disjoint.

Now, as in Lemma~\ref{lem:dist}, the level curves at distance at
most~$\frac{L}{4}$ from the boundary components of~$Y$ project to
curves that separate the isolated branch points from the equator
in~$\hat{\C}^+$.  Thus, each of these level curves has a
noncontractible component in~$X$ and so is of length at least~$L$.
From the coarea inequality, we deduce that the area of each tubular
neighborhood~$U_j$ is at least~$\frac{L^2}{4}$.  Since the metric
satisfies~\eqref{61}, the area of~$D_i$ is at least~$\frac{L^2}{8}$.
Adding these lower bounds, we obtain
$$
\area(X) \geq \left( \frac{1}{2} + \frac{n}{8} \right) \, L^2.
$$

\medskip

For $n$ odd with $n=2k+1 \geq 5$, the orientable double cover
of~$X=\Sigma_{k} \# \rp^2$ is a hyperelliptic surface~$\Sigma_{2k}$ of
genus $2k$.  As previously, the preimage of the equilatorial circle
$\hat{\R} \subset \hat{\C}$ under the double cover $\Sigma_{2k} \to
\hat{\C}$ is a single circle.  This circle bounds the preimage~$Y$ of
the northern hemisphere~$\hat{\C}^+$ under the previous double cover.
The orientation-reversing involution~$\tau$ of~$\Sigma_{2k}$ takes the
points of~$\partial Y$ to their opposite points.  We can obtain~$X$
from~$Y$ by identifying the pairs of opposite points of~$\partial Y$.

As previously, the distances between the Weierstrass points of~$Y$
and~$\partial Y$, and between any pair of Weierstrass points are at
least~$\frac{L}{2}$.

Arguing as in the previous case, we deduce from the coarea inequality
that the area of the $\frac{L}{4}$-tubular neighborhood of~$\partial
Y$ is at least $\frac{L^2}{4}$.  Combined with the estimates on the
areas of the disjoint disks of radius~$\frac{L}{4}$ centered at the
Weierstrass points of~$Y$, we obtain
\[
\area(X) \geq \left( \frac{1}{4} + \frac{n}{8} \right) \, L^2,
\]
proving the proposition.
\end{proof}

\section*{Acknowledgments}

We are grateful to C. Croke for helpful discussions.


\end{document}